\documentclass[11 pt, twoside]{amsart}
\usepackage[a4paper,margin=0.75in]{geometry}
\usepackage{amssymb,latexsym,amsthm,amsmath,mathtools,amsfonts,hyperref,fancyhdr, comment,enumitem, cleveref,xcolor,bbm, caption, amssymb, mathrsfs, bm, tikz, xcolor}
\usepackage{mathtools}
\usepackage[enableskew]{youngtab}
\usepackage[centertableaux]{ytableau}

\hypersetup{
	colorlinks=true,
	linkcolor=blue,
	filecolor=magenta,
	urlcolor=cyan,
	citecolor=red,
}

\long\def\symbolfootnote[#1]#2{\begingroup%
	\def\thefootnote{\fnsymbol{footnote}}\footnote[#1]{#2}\endgroup}

\newcommand{\C}{\ensuremath{\mathscr{C}}}
\newcommand{\D}{\ensuremath{\mathscr{D}}}
\newcommand{\p}{\mathscr{P}}

\newcommand{\x}{\mathfrak X}
\newcommand{\h}{\textup{ht}}

\newcommand{\DP}{\mathscr{DP}}
\newcommand{\OP}{\mathscr{OP}}
\newcommand{\wtilde}{\widetilde}

\newcommand{\tensor}{\otimes}
\newcommand{\Mod}{\mathrm{mod}}

\newcommand{\ind}{\mathrm{Ind}}

\newcommand{\irr}{\mathrm{Irr}}

\newcommand{\och}{\mathrm{och}}
\newcommand{\ty}{\mathrm{ty}}

\makeatletter
\def\imod#1{\allowbreak\mkern10mu({\operator@font mod}\,\,#1)}
\makeatother

\newtheorem{theorem}{Theorem}[section]
\newtheorem{lemma}[theorem]{Lemma}
\newtheorem{corollary}[theorem]{Corollary}
\newtheorem{proposition}[theorem]{Proposition}
\newtheorem*{theorem*}{Theorem}
\newtheorem{definition}[theorem]{Definition}

\newtheorem{remark}[theorem]{Remark}

\newtheorem{example}[theorem]{Example}

\numberwithin{equation}{section}
\newcommand{\ignore}[1]{}

\newcommand{\mynote}[1]{}

\newcommand{\bx}[1]{\fcolorbox{red}{white}{$#1$}}

\newcommand{\bardiag}[3]{%
\begin{tikzpicture}[scale=0.5,baseline=(current bounding box.center)]
  \foreach \rx/\rr in {#3}{%
    \pgfmathtruncatemacro\ry{#1-\rr}
    \fill[red!75!orange] (\rx,\ry) rectangle (\rx+1,\ry+1);
  }
  \xdef\prevE{0}
  \foreach \len [count=\ri] in {#2}{%
    \pgfmathtruncatemacro\yy{#1-\ri}
    \pgfmathtruncatemacro\st{\ri-1}
    \pgfmathtruncatemacro\ee{\st+\len}
    \pgfmathtruncatemacro\mm{\len-1}
    \foreach \xx in {0,...,\mm}{\draw (\st+\xx,\yy) rectangle (\st+\xx+1,\yy+1);}
    \node[anchor=west,inner sep=1pt] at (\ee,\yy+0.5) {$0$};
    \pgfmathtruncatemacro\dd{\prevE-\ee}
    \ifnum\dd>0
      \foreach \jj in {1,...,\dd}{%
        \node[anchor=north,inner sep=1pt] at (\prevE+0.8-\jj,\yy+1) {$1$};}
    \fi
    \xdef\prevE{\ee}%
  }
  \pgfmathtruncatemacro\dd{\prevE-#1+1}
  \foreach \jj in {1,...,\dd}{%
    \node[anchor=north,inner sep=1pt] at (\prevE+0.8-\jj,0) {$1$};}
\end{tikzpicture}}

\newcommand{\arr}[1]{\xrightarrow{\ \text{\scriptsize #1}\ }}

\newcommand{\tm}[2]{\tikz[remember picture,baseline=(#1.base)]\node[inner sep=0pt](#1){#2};}
\newcommand{\flowarrow}[4]{%
\begin{tikzpicture}[remember picture,overlay]
  \draw[->,thick] (#1.south) -- ++(0,-0.7) coordinate(m) -- (m -| #2.north) -- (#2.north);
  \path (m) -- (m -| #2.north) node[midway,above,font=\scriptsize]{#3} node[midway,below,font=\scriptsize]{#4};
\end{tikzpicture}}

\begin{document}

\title[A Morris recursion rule for wreath products] {A Morris recursion rule for values of the spin characters of wreath products}

\author{Rijubrata Kundu}
\address{Department of Mathematics, Birla Institute of Technology and Science, Pilani, Rajasthan, 333031, India}
\email{rijubrata8@gmail.com, rijubrata.kundu@pilani.bits-pilani.ac.in}
	
\author{Papi Ray}
\address{Department of Mathematical Sciences, Indian Institute of Science Education and Research, Mohali-140306, India}
\email{popiroy93@gmail.com}
	
\date{\today}
\subjclass[2010]{20C30, 20E22, 05E10}
\keywords{spin wreath products, spin characters, Murnaghan-Nakayama rule, Schur Q-functions}

\begin{abstract}
    The classical Murnaghan-Nakayama rule is a recursive formula for computing the values of complex irreducible characters of the symmetric group $S_n$. Alun Morris proved a recursive formula for evaluating the values of irreducible spin characters of $\wtilde{S}_n$, where $\wtilde{S}_n$ is one of the Schur covers of $S_n$  defined by $\wtilde{S}_n:=\langle t_1,t_2,\cdots,t_{n-1},z\ |\ z^2=1,\ t_i^2=z,\ (t_it_{i+1})^3=z, \ t_it_j=zt_jt_i\ \text{if}\ |i-j|>1\rangle$. A recursive formula for evaluating the values of complex irreducible characters of the wreath product $G\wr S_n$, where $G$ is a finite group, was proved by J. Stembridge. In this article, we state and prove a recursive formula to compute the values of the irreducible spin characters of the wreath product $G\wr \wtilde{S}_n$. For the convenience of implementing these recursive formulas, we extend the notion of 0-1 boundary sequence of a Young diagram to shifted Young diagrams. 
\end{abstract}

\maketitle

\section{Introduction}\label{intro}
    Let $\wtilde{S}_n$ be one of the Schur covers of $S_n$ defined by $\wtilde{S}_n:=\langle t_1,t_2,\cdots,t_{n-1},z\ |\ z^2=1,\ t_i^2=z,\ (t_it_{i+1})^3=z, \ t_it_j=zt_jt_i\ \text{if}\ |i-j|>1\rangle$. In 1911, Schur developed the theory of complex irreducible characters of $\wtilde{S}_n$ (equivalently, complex projective characters of $S_n$). The irreducible characters of $\wtilde{S}_n$ are of two kinds: (1) the irreducible characters that arise from the irreducible characters of $S_n$ (these are called positive irreducible characters as they are characterized by the fact that the corresponding representations map $z$ to $I$), and (2) the irreducible spin characters, that is, characters whose corresponding representations map $z$ to $-I$ (these are also called negative irreducible characters). In analogy to the fact that irreducible characters of $S_n$ correspond to Schur functions, Schur connected the irreducible spin characters to certain symmetric functions, which are now well-known as Schur Q-functions. Much later, Morris used these Schur Q-functions to develop a Murnaghan-Nakayama-type recursive rule to evaluate values of irreducible spin characters on conjugacy classes indexed by partitions having only odd parts (see \cite{mo.64}). In the process, he introduced a bar graph of partitions to interpret the recursive rule. Macdonald introduced shifted Young diagrams to study the combinatorics of Schur Q-functions (see \cite{mac}). Sagan and Worley (see \cite{sa,wo}) independently developed the combinatorial theory of shifted diagrams and tableaux. Further, Stembridge (see \cite{stem}) gave a symmetric function approach to construct the irreducible spin characters while also obtaining several new results, such as a shifted analog of the Littlewood-Richardson rule and results on the Kronecker product of Schur Q-functions (equivalently, the product of irreducible spin characters). A detailed account of Schur's original work and many subsequent developments, including those mentioned above, is given in the book by Hoffman and Humphreys (see \cite{hoff}).

    \medskip

    The representation theory of the wreath product $G\wr S_n$, where $G$ is a finite group, was developed by Specht in 1932. A representation-theoretic way of obtaining the complex irreducible characters of $G\wr S_n$ using Clifford theory is given in \cite[Chapter 4]{jk}. Macdonald (see \cite[Appendix B]{mac}) developed a symmetric function approach to obtain the irreducible characters of $G\wr S_n$ by defining a Frobenius characteristic map from the space of class functions of $G\wr S_n$ onto a space of homogeneous degree $n$ symmetric functions parametrized by conjugacy classes of $G$ (see Section~\ref{Frob_char_GwrSntilde}). Stembridge (see \cite[Appendix]{stem1}) gave a  Murnaghan-Nakayama-type recursive rule to evaluate values of irreducible characters of $G\wr S_n$.

    \medskip

    The representation theory of the group $G\wr \wtilde{S}_n$, where $G$ is a finite group, was studied in \cite{hoff1} from the viewpoint of Hopf algebras. Alike $\wtilde{S}_n$, there are two kinds of complex irreducible characters of $G\wr \wtilde{S}_n$, namely the positive irreducible characters which arise from the irreducible characters of $G\wr S_n$ and the irreducible spin characters (or negative irreducible characters). In \cite{fren}, observing deep connections with the McKay correspondence,  Frenkel, Jing, and Wang developed a vertex-algebra approach to the spin representation theory of $G\wr\wtilde{S}_n$.  The values of the irreducible spin characters of $G\wr \wtilde{S}_n$ evaluated on conjugacy classes indexed by partition-valued functions with odd parts were obtained as matrix coefficients of certain products of twisted vertex operators (\cite[Theorem 9.7]{fren}). Moreover, analogous to  Macdonald's symmetric function approach to the character theory of $G\wr S_n$, they introduced an appropriate space of symmetric functions and established, via a Frobenius characteristic map, an isomorphism between this space and the space of spin class functions of $G\wr\widetilde{S}_n$. The resulting symmetric-function space admits a distinguished basis given by generalized Schur Q-functions, which correspond to the irreducible spin characters of $G\wr\widetilde{S}_n$. 

    \medskip

    While the Morris recursion rule provides a method for computing values of the irreducible spin characters of $\widetilde{S}_n$ on conjugacy classes indexed by partitions with only odd parts, explicit formulas for their values on conjugacy classes indexed by odd strict partitions have been determined (see \cite[Theorem 8.7]{hoff}). Hu and Jing explicitly determined the values of irreducible spin characters of $G\wr \wtilde{S}_n$ on conjugacy classes indexed by odd strict partition-valued functions for a finite abelian group $G$ in \cite{hu} and then generalized the same for all finite groups in \cite{hu1}.

    \medskip

    The main theorem of this article, Theorem~\ref{main_theorem}, provides a recursion rule to compute the values of the irreducible spin characters of $G\wr \wtilde{S}_n$ on conjugacy classes indexed by partition-valued functions with only odd parts. This generalizes the Morris recursion rule. As expected, the rule looks very similar to Stembridge's recurrence rule for ordinary wreath products, where the combinatorics of Young diagrams is replaced by that of shifted diagrams.  We prove our main theorem using the symmetric function framework developed in \cite{fren} for irreducible spin characters of $G\wr \wtilde{S}_n$. Inspired by the well-known $0$-$1$ boundary sequence for ordinary Young diagrams, we introduce its analogue for shifted Young diagrams and use this description to interpret the Morris recursion rule (see Section~\ref{0-1-seq-shifted}). This also gives a convenient way to implement the recursion rule for  $\wtilde{S}_n$ as well as for $G\wr \wtilde{S}_n$. The article is organized as follows: In Section~\ref{irrep_tildeS_n}, we briefly discuss the theory of irreducible spin characters of $\wtilde{S}_n$ and the Morris recursion, which we use in the proof of our main theorem. In Section~\ref{irreps_of_GwrSntilde}, we discuss the construction of irreducible spin representations of $G\wr \wtilde{S}_n$, followed by Section~\ref{Frob_char_GwrSntilde}, where the symmetric function framework for the irreducible spin characters of $G\wr \wtilde{S}_n$ is discussed following \cite{fren}. We state and prove our main theorem in Section~\ref{sec_main_theorem}. We conclude our article with Section~\ref{0-1-seq-shifted}, where we discuss some aspects of $0$-$1$ boundary sequence of a shifted diagram.
    
\section{Complex Irreducible Representation of \texorpdfstring{$\wtilde{S}_n$}{Sn tilde}}\label{irrep_tildeS_n}
    In this section, we discuss the theory of irreducible spin characters of $\wtilde{S}_n$ briefly. We divide this into several subsections, starting with a description of conjugacy classes of $\wtilde{S}_n$, followed by the definition of irreducible spin characters (or negative irreducible characters) of $\wtilde{S}_n$ and the Frobenius characteristic function that eventually connects the irreducible spin characters to Schur $Q$-functions. We closely follow \cite{hoff, stem} for this purpose. In the final important subsection, following \cite{mac}, we discuss the Morris recursion rule for evaluating values of irreducible spin characters of $\wtilde{S}_n$. 

    \medskip
    
    Let $\wtilde{S}_n$ be one of the Schur covers of $S_n$ defined by $\wtilde{S}_n:=\langle t_1,t_2,\cdots,t_{n-1},z\ |\ z^2=1,\ t_i^2=z,\ (t_it_{i+1})^3=z, \ t_it_j=zt_jt_i\ \text{if}\ |i-j|>1\rangle$. We recall that $S_n=\langle s_1,s_2,\cdots,s_{n-1}\ |\ s_i^2=e,\ (s_is_{i+1})^3=e,\  s_is_j=s_js_i,\ \mbox{if}\ |i-j|>1 \rangle$, where $s_i=(i,i+1)\in S_n$ for all $1\leq i\leq n-1$. The canonical homomorphism $\theta: \wtilde{S}_n\to S_n$ is defined by $\theta(z)=1$ and $\theta(t_i)=s_i$. Clearly, $\wtilde{S}_n/\{1,z\}\cong S_n$ (under $\theta$) and we have a short exact sequence $$\{1\}\to \mathbb{Z}/2\mathbb{Z} \xrightarrow{i} \wtilde{S}_n \xrightarrow{\theta}S_n\to \{1\}.$$
    For $\sigma \in \wtilde{S}_n,\ \theta(\sigma)\in S_n$ is called the \textit{projection} of $\sigma$ in $S_n$. More generally, for any subset $X\subseteq \wtilde{S}_n,\ \theta(X)$ is called the \textit{projection} of $X$ in $S_n$. Similarly, for any $Y\subseteq S_n,\ \theta^{-1}(Y)$ is called the \textit{pull-back} of $Y$ in $\wtilde{S}_n$. We note that if $\sigma \in \wtilde{S}_n$ is such that $\theta(\sigma)=\pi$, then $\theta^{-1}(\pi)=\{\sigma, z\sigma\}$. For $\sigma \in \wtilde{S}_n$, let $d(\sigma)$ denote the parity of $\theta(\sigma)\in S_n$, that is, $d(\sigma)=0$ if $\theta(\sigma)$ is an even permutation and $d(\sigma)=1$, if $\theta(\sigma)$ is an odd permutation.

 
    \subsection{Conjugacy classes of \texorpdfstring{$\wtilde{S}_n$}{Sn tilde}}\label{conj_classes_Sntilde}
        Let $\lambda=(\lambda_1,\lambda_2,\cdots,\lambda_l)$ with $\lambda_1\geq \lambda_2\geq\cdots\geq \lambda_l\geq 0$ be a partition of $n$ (written $\lambda \vdash n$). It is sometimes convenient to write $\lambda =\langle 1^{m_1},2^{m_2},\cdots \rangle$, where $m_i$ denotes the number of times $i$ occurs as part of $\lambda$. For $\pi \in S_n$, let $\ty(\pi)=\langle 1^{m_1},2^{m_2},\cdots \rangle$, where $m_i$ denotes the number of $i$-cycles occurring in a disjoint cycle decomposition of $\pi$. Thus, $\ty(\pi)$ is a partition of $n$, and it is well known that if $\pi, \sigma\in S_n$, then $\pi$ and $\sigma$ are conjugate if and only if $\ty(\pi)=\ty(\sigma)$, Thus, conjugacy classes of $S_n$ are indexed by partitions of $n$. Let $\C_{\lambda}$ be the conjugacy class of $S_n$ indexed by $\lambda$. Let $l(\lambda)$ denote the length of $\lambda$, that is, the number of parts in $\lambda$. Recall that $\pi \in S_n$ is odd if and only if $n-l(\lambda)$ is odd, where $\lambda=\ty(\pi)$. Consequently, we say that a partition $\lambda\vdash n$ is odd if $n-l(\lambda)$ is odd and even if $n-l(\lambda)$ is even. We define the parity of $\lambda$ as $n-l(\lambda)\ (\Mod\; 2)$, and denote it by $d(\lambda)$. It is easy to see that the pull-back of $\C_\lambda$ in $\wtilde{S}_n$, denoted by $\D_{\lambda}$, is either a conjugacy class of $\wtilde{S}_n$ (in which case we call it a non-split class) or it splits into two conjugacy classes, say $\D_\lambda^+$ and $\D_\lambda^-$, and is related by $\D_\lambda^-=z\D_\lambda^+$. We call $\D_{\lambda}^{+}$ and $\D_{\lambda}^{-}$ split classes of $\wtilde{S}_n$. 

        \begin{theorem}\cite[Theorem 3.8]{hoff}\label{split_classes_Sntilde}
            Let $\lambda \vdash n$. The pull-back of $\C_\lambda$ in $\wtilde{S}_n$ splits into two conjugacy classes $\D_\lambda^+$ and $\D_\lambda^-$ if and only if one of the following holds.
            \begin{enumerate}
                \item $\lambda$ has only odd parts, 
                \item $\lambda$ has distinct parts and $n-l(\lambda)$ is odd.
            \end{enumerate}
        \end{theorem}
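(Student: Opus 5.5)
The plan is to reduce splitting to a centralizer criterion. Fix $\pi\in\C_\lambda$ and a lift $\sigma\in\wtilde{S}_n$. The pull-back $\D_\lambda$ splits if and only if $\sigma$ is not conjugate to $z\sigma$. This in turn holds if and only if every $\tilde\tau\in\theta^{-1}(C_{S_n}(\pi))$ satisfies $\tilde\tau\sigma\tilde\tau^{-1}=\sigma$ rather than $z\sigma$.

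The key tool is the commutator rule for lifts. For permutations $\alpha,\beta$ that commute in $S_n$, write $[\tilde\alpha,\tilde\beta]\in\{1,z\}$ for the commutator of any lifts; it is independent of the choice of lifts. For disjoint cycles, the relations $t_it_j=zt_jt_i$ for $|i-j|>1$ give $[\tilde\alpha,\tilde\beta]=z^{d(\alpha)d(\beta)}$. From this and bimultiplicativity I also need the formula $[\tilde\alpha,\tilde\alpha']=z^{d(\alpha)}$ when $\alpha$ is a cycle and $\alpha'$ is a power of $\alpha$ (or a commuting element supported on the same orbit). I will verify this by writing the lift of a $k$-cycle as $t_1t_2\cdots t_{k-1}$ and checking that it commutes with its own powers. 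So the lift of a cycle commutes with lifts of its powers.

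Next I use the structure of the centralizer. $C_{S_n}(\pi)$ is generated by powers of the individual cycles of $\pi$ and by permutations $\rho$ swapping two cycles of equal length $k$ (a product of $k$ transpositions, with $d(\rho)\equiv k$). Conjugation is multiplicative in $\tau$, so it suffices to compute the sign on generators.

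First, let $\tau=c$ be one cycle of $\pi$, with $\pi=c\cdot c'$ and $c'$ the remaining cycles. Then $\tilde c$ commutes with $\tilde c$ and picks up $z^{d(c)d(c')}$ from $\tilde c'$. Summing over all cycles $c$, the condition that all of these are trivial is that $d(c)d(\pi)\equiv d(c)^2$ for every cycle, i.e. $d(c)(d(\pi)-1)\equiv 0$. This means: either every cycle is odd-length ($d(c)=0$ for all $c$), or $\pi$ is odd.

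Second, let $\tau=\rho$ swap two cycles of equal length $k$. I will compute $\tilde\rho\tilde\pi\tilde\rho^{-1}$ directly. The part of $\pi$ outside the two swapped cycles contributes $z^{k\cdot d(\text{rest})}$. Swapping two commuting lifts $\tilde a,\tilde b$ with $a,b$ of parity $k-1$ gives $\tilde b\tilde a=z^{(k-1)^2}\tilde a\tilde b$. I will check via a small computation that the total sign is $z^{k(d(\pi)-(k-1)\cdot 2)+(k-1)}$, which simplifies to $z^{k\,d(\pi)+k-1}$. When all parts are odd, $d(\pi)=0$ and the sign is $z^{k-1}$, which is trivial since $k$ is odd. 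When $\pi$ is odd, the sign is $z^{2k-1}=z$, which is nontrivial; so repeated parts must be excluded.

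Combining the two generator computations: splitting holds iff either all parts are odd, or $\pi$ is odd and all parts are distinct. This is exactly conditions (1) and (2). Note that if all parts are odd then $\pi$ is even, so the two cases are disjoint.

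The main obstacle is getting the sign for the swap generator $\rho$ right. I would do this cleanly by choosing explicit lifts, for instance realizing $\wtilde{S}_n$ inside the Clifford algebra via $t_i\mapsto (e_i-e_{i+1})/\sqrt2$. In that model, lifts of disjoint elements commute up to the product of their parities, and the parity of a lifted permutation equals its Clifford degree. The centralizer computations then become bookkeeping of Clifford degrees.
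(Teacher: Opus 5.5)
The paper does not prove this statement. It is quoted from Hoffman--Humphreys \cite[Theorem 3.8]{hoff}, so there is no internal proof to compare against. Your argument is a correct, self-contained proof along the standard lines. The map $\tau\mapsto \tilde\tau\sigma\tilde\tau^{-1}\sigma^{-1}$ is a homomorphism $C_{S_n}(\pi)\to\{1,z\}$, and splitting means this homomorphism is trivial. You then evaluate it on the generators of $C_{S_n}(\pi)\cong\prod_k C_k\wr S_{m_k}$: the cycles of $\pi$, and the aligned swaps $\rho=\prod_i(a_i\,b_i)$ of two cycles of equal length. Both generator computations and the final case analysis are right. This includes $k=1$: two fixed points in an odd $\pi$ already force non-splitting, which matches ``distinct parts''.

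A few things to tidy.

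First, the formula $[\tilde\alpha,\tilde\alpha']=z^{d(\alpha)}$ for $\alpha'$ a power of the cycle $\alpha$ is false for even-length cycles; for example $[t_1,t_1]=1$. The correct statement, which is the one you actually use, is $[\tilde\alpha,\tilde\alpha']=1$. It needs no computation with $t_1\cdots t_{k-1}$, since any lift of $\alpha^j$ is $\tilde\alpha^{j}$ up to the central element $z$.

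Second, in the swap computation, fix the lift $\tilde b:=\tilde\rho\tilde a\tilde\rho^{-1}$. Since $\rho^2=1$, the element $\tilde\rho^2\in\{1,z\}$ is central, so $\tilde\rho\tilde b\tilde\rho^{-1}=\tilde a$. Then
\[
\tilde\rho(\tilde a\tilde b\tilde r)\tilde\rho^{-1}=z^{k\,d(r)}\,\tilde b\tilde a\tilde r=z^{k\,d(r)+(k-1)^2}\,\tilde a\tilde b\tilde r ,
\]
with $d(r)\equiv d(\pi)$. This is your exponent, and it removes any ambiguity about which lifts are being swapped.

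Third, justify the rule $[\tilde\alpha,\tilde\beta]=z^{d(\alpha)d(\beta)}$ for disjoint $\alpha,\beta$ in one line. Commutator values are invariant under simultaneous conjugation because $z$ is central. So you may assume the supports lie in $\{1,\dots,a\}$ and $\{a+1,\dots,n\}$. The lifts are then words in $t_1,\dots,t_{a-1}$ and in $t_{a+1},\dots,t_{n-1}$, of lengths $\equiv d(\alpha)$ and $\equiv d(\beta)$, and each cross pair anticommutes.

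Finally, the Clifford model is unnecessary: the whole argument uses only $t_it_j=zt_jt_i$ for $|i-j|>1$ and $z\neq 1$. As written, $t_i\mapsto(e_i-e_{i+1})/\sqrt2$ realizes the paper's cover only with the convention $e_i^2=-1$. With $e_i^2=+1$ it gives the other double cover, in which $t_i^2=1$.
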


        Let $\p$ denote the set of all partitions (including the empty partition of zero). If $\lambda\in \p$, $|\lambda|$ denotes the sum of its parts. Let $\OP$ denote the set of all partitions with only odd parts. Let $\DP$ denote the set of all strict partitions. For $n\in \mathbb{N}$, let $\p_n$ denote the set of partitions of $n$. Set $\OP_n:=\OP\cap \p_n$ and $\DP_n:=\DP\cap \p_n$. For $i=0,1$, let $\DP_n^i=\{\lambda \in \DP_n\ |\ d(\lambda)=i\}$. With these notations, Theorem \ref{split_classes_Sntilde} asserts that if $\lambda\in \OP_n \cup \DP_n^1$, $\D_{\lambda}$ splits into two conjugacy classes $\D_\lambda^+$ and $\D_\lambda^-$, otherwise it remains a $\wtilde{S}_n$-conjugacy class. Finally, we label the split classes $\D_\lambda^+$ and $\D_\lambda^-$ in the following way (see \cite[2.1]{stem}): 
        If $\lambda=(\lambda_1,\lambda_2,\cdots,\lambda_l)$, define $\sigma_\lambda:=\pi_1\pi_2\cdots \pi_l$, where $\pi_j:=t_{a+1}t_{a+2}\cdots t_{a+\lambda_j-1}$, $a=\lambda_1+\lambda_2+\cdots+\lambda_{j-1}$, and designate the conjugacy class of $\sigma_\lambda$ as $\D_\lambda^+$. Thus, $\D_\lambda^-$ is the conjugacy class of $z\sigma_\lambda$. The number of conjugacy classes of $\wtilde{S}_n$ equals $|\p_n|+|\OP_n|+|\DP_n^1|$. 

        \subsection{Irreducible spin characters of \texorpdfstring{$\wtilde{S}_n$}{Sn tilde}}\label{spin_char_Sntilde}
        Let $\varphi$ be an irreducible representation of $\wtilde{S}_n$. By Schur's lemma, $\varphi(z)=\pm I$. An irreducible representation $\varphi$ of $\wtilde{S}_n$ is called \textit{positive} if $\varphi(z)=I$ and is called \textit{negative} or \textit{spin representation} if $\varphi(z)=-I$. The corresponding character is called a \textit{irreducible spin character}. The positive irreducible representations of $\wtilde{S}_n$ are lifts of the irreducible representations of $S_n$ (via the map $\theta$), which are indexed by partitions of $n$. If $\varphi$ is a representation of $\wtilde{S}_n$, then $\varphi^a(\pi)=(-1)^{d(\pi)}\varphi(\pi)$ is a representation of $\wtilde{S}_n$, called the \textit{associate representation} of $\varphi$. $\varphi$ is irreducible if and only if $\varphi^a$ is irreducible. $\varphi$ is called \textit{double} or \textit{self-associate} if $\varphi$ and $\varphi^a$ are isomorphic. Since the number of positive irreducible characters of $\wtilde{S}_n$ is $|\p_n|$, it is clear that the number of irreducible spin characters is $|\OP_n|+|\DP_n^1|$ which is same as $|\DP_n|+|\DP_n^1|$. For each $\lambda \in \DP_n$, there is an irreducible spin representation of $\wtilde{S}_n$, which we denote by $\psi_\lambda$. If $\lambda \in \DP_n^0$, $\psi_\lambda$ is self-associate. If  $\lambda\in \DP_n^1$,  then $\psi_\lambda \neq \psi^a_\lambda$, whence $\psi_\lambda^a$ yields the remaining $|\DP_n^1|$ irreducible spin representations. Let $\wtilde{\chi}_{\lambda}$ denote the character of the irreducible spin representation $\psi_{\lambda}$, where $\lambda \in \DP_n$.  

        \medskip

        It is easy to see that $\wtilde{\chi}_{\lambda}(\D_\mu)=0$ for any non-split class $\D_\mu$, while $\wtilde{\chi}_{\lambda}(\D_\mu^+)=-\wtilde{\chi}_{\lambda}(\D_\mu^-)$ for split classes $\D_\mu^+$ and $\D_\mu^-$. For $\lambda=(n)$, we recall that $\wtilde{\chi}_{(n)}$ is the \textit{basic character} of $\wtilde{S}_n$, which we simply denote by $\wtilde{\chi}_n$. The character values of $\wtilde{\chi}_n$ are explicitly given in \cite[Theorem 3.3]{stem}. From the theorem, we also observe that the $\wtilde{\chi}_n(\sigma_{\mu})$ is positive whenever $\mu \in \OP_n$. 



    \subsection{Frobenius characteristic function}\label{Frob_char_Sntilde}
        Let $R(\wtilde{S}_n)$ be the space of all \textit{class functions} of $\wtilde{S}_n$ over $\mathbb{C}$. Then $R(\wtilde{S}_n)=R^+(\wtilde{S}_n)\oplus R^-(\wtilde{S}_n)$, where $R^+(\wtilde{S}_n)$ and $R^-(\wtilde{S}_n)$ are subspaces spanned by the positive irreducible characters and the irreducible spin characters, respectively. For $\lambda\in \OP_n\cup \DP_n^1$, let $1'_\lambda$ be defined by
        \[{1'_{\lambda}}(x)=\begin{cases}
        1 & \text{if} \quad x \in \D_{\lambda}^+,\\
        -1 & \text{if}\quad x\in \D_{\lambda}^-,\\
        0 & \text{otherwise}.
        \end{cases}\]
        Then $ R^-(\wtilde{S}_n)=\langle 1'_\lambda\ |\ \lambda \in \OP_n\cup \DP_n^1\rangle$. $R^-(\wtilde{S}_n)$ is called the space of \textit{spin class functions} of $\wtilde{S}_n$. Define $R^0(\wtilde{S}_n)=\langle 1'_\lambda\ |\ \lambda \in \OP_n \rangle \subseteq R^-(\wtilde{S}_n)$. $R^0(\wtilde{S}_n)$ is called the space of \textit{super spin class functions}. We note that $\wtilde{\chi}_\lambda \in R^0(\wtilde{S}_n)$ when $\lambda \in \DP_n^0$ and $\wtilde{\chi}_\lambda+\wtilde{\chi}_\lambda^a \in R^0(\wtilde{S}_n)$ when $\lambda \in \DP_n^1$.

        \medskip

        Let $\Lambda_n$ be the space of homogeneous symmetric functions over $\mathbb{C}$ of degree $n$. Let $\displaystyle \Lambda:=\oplus_{n=0}^\infty \Lambda_n$. Let $p_r$ denote the power sum symmetric function of degree $r$. Let $\Delta$ be the subalgebra of $\Lambda$ generated by $\{1,p_1,p_3,p_5,\cdots\}$. Let $\Delta_n:=\langle p_\lambda\ |\ \lambda \in \OP_n \rangle$, where $p_{\lambda}$ is the power sum symmetric function corresponding to $\lambda$. Clearly, $\Delta_n$ is a subspace of $\Lambda_n$. Moreover, $\displaystyle \Delta=\oplus_{n=0}^\infty \Delta_n$. For $n\in \mathbb{N}$, the symmetric functions  $q_n$ are defined by the generating function $$1+\sum_{n=1}^\infty q_nt^n=\prod_{i \geq 1} \frac{1+x_it}{1-x_it}.$$
        This yields $q_n=\sum_{\lambda \vdash n}2^{l(\lambda)}m_\lambda,$ where $m_{\lambda}$ denotes the monomial symmetric function corresponding to $\lambda$. The following identity connects these with the power symmetric functions.
        $$q_n=\sum_{\lambda \in \OP_n}\frac{2^{l(\lambda)}p_\lambda}{z_\lambda},$$
        where $z_{\lambda}$ is the size of the centralizer of an element of the conjugacy class $\C_{\lambda}$ of $S_n$. Thus, $q_n\in \Delta_n$. For $\lambda=(\lambda_1,\ldots,\lambda_l)\vdash n$, set $q_{\lambda}:=q_{\lambda_1}\cdots q_{\lambda_l}$. Then $q_{\lambda}\in \Delta_n$. Moreover, $\Delta_n=\langle q_\lambda\ |\ \lambda\in \DP_n \rangle$. Define an inner product on $\Delta_n$ as follows: For $\lambda,\mu\in \OP_n$,
        $$\langle p_\lambda, p_\mu \rangle=2^{-l(\lambda)}z_\lambda\delta_{\lambda \mu}.$$

        \noindent The Frobenius characteristic function is a linear map $\och_n:R^-(\wtilde{S}_n)\to \Delta_n$ defined by \[\och_n(1'_{\lambda})=\begin{cases}
        \dfrac{2^{\frac{l(\lambda)}{2}}}{z_\lambda}p_\lambda & \text{if} \quad \lambda \in \OP_n,\\
        0 & \text{if} \quad \lambda \in \DP^-_n.
        \end{cases}\]

        \noindent Thus, for $f \in R^-(\wtilde{S}_n)$, $$\och_n(f)=\sum_{\lambda \in \OP_n}\frac{f(\sigma_\lambda)2^{l(\lambda)/2}}{z_\lambda}p_\lambda.$$
        
        \noindent The map $\och|_{R^0(\wtilde{S}_n)}$ is also an isometry from $R^0(\wtilde{S}_n)$ onto $\Delta_n$, that is, for $f,g \in R^0(\wtilde{S}_n)$, 
        $$\langle \och_n(f),\och_n(g) \rangle_{\Delta_n}=\langle f,g \rangle_{\wtilde{S}_n}.$$

        \noindent More generally, for $f\in R^-(\wtilde{S}_n)$ and $g\in R^0(\wtilde{S}_n)$,
        $\langle \och_n(f),\och_n(g) \rangle=\langle f,g \rangle_{\wtilde{S}_n}.$ We note the obvious fact that och$_n(f)=\och_n(f^a)$. The images of the irreducible spin characters under $\och_n$ are given by Schur $Q$-functions, which we define now. We define it combinatorially (see \cite[Section 6]{stem}) using the notation of shifted tableaux. For any $\lambda \in \DP_n$, the associated shifted diagram is denoted by ${SD}_\lambda$ and is defined by $${SD}_\lambda=\{(i,j)\in \mathbb{N}^2:\ i \leq j\leq\lambda_j+i-1,\ 1 \leq i \leq l(\lambda)\}.$$
        Let $P=\{1',1,2',2,\cdots\}$, where $1'<1<2'<2<\cdots$. The letters $1',2',\cdots$ are said to be \textit{marked}. For any $a'\in P$, the unmarked version of $a'$ is denoted by $a$. For any $\lambda \in \DP_n$, a \textit{semi-standard shifted tableau} $T$ of shape $\lambda$ is a map $T:{SD}_\lambda \to P$ such that (1) $T(i,j) \leq T(i+1,j),\ T(i,j)\leq T(i,j+1)$,\;(2) Each column has at most one $k\ (k=1,2,\cdots)$,\;(3) Each row has at most one $k'\ (k'=1',2',\cdots)$. The set of all semi-standard shifted tableau (abbrv. SSST) of shape $\lambda$ is denoted by SSST$_\lambda$.

        \begin{example}
            For $\lambda=(7,6,3,1)\in \DP_{17}$,
            $$T=\hspace{.2cm}\ytableausetup{smalltableaux}\begin{ytableau}
            1' & 1 & 1 & 2' & 4' & 4 & 6 \\
            \none & 2' & 2 & 3 & 4' & 5' & 7' \\
            \none & \none & 3' & 4' & 4 \\
            \none & \none & \none & 7 
            \end{ytableau} ,$$
            is a semi-standard shifted tableau of shape $\lambda$ on $P$.
        \end{example}
         
         \noindent For any $\lambda \in \DP_n$ and $\mu \in \p_n$, we say $T$ is a SSST of shape $\lambda$ and type $\mu$ if $i$ and $i'$ together occur $\mu_i$ times in $T$. With these notations, for $\lambda\in \DP_n$, we define the Schur's $Q$-functions as follows:
        $$Q_\lambda:=\sum_{T\in \mbox{SSST}_\lambda}x^T.$$
        Let $k'_{\lambda \mu}$ denote the number of SSST of shape $\lambda$ and type $\mu$. Then
        $$Q_\lambda=\sum_{\mu \vdash n}k'_{\lambda \mu}m_\mu.$$
        From \cite[Corollary 6.2]{stem}, it is also known that
        $$q_\lambda=\sum_{\lambda \in \DP_n}\frac{k'_{\lambda \mu}}{2^{l(\lambda)}}Q_\lambda.$$
        and $\Delta_n=\langle Q_\lambda\ |\ \lambda \in \DP_n \rangle$. Moreover, $\langle Q_\lambda, Q_\mu \rangle=2^{l(\lambda)}\delta_{\lambda \mu}$. Define $Q_\lambda^*=2^{\frac{-l(\lambda)}{2}}Q_\lambda$. Thus $\{Q^{*}_\lambda\ |\ \lambda \in \DP_n\}$ is an orthonormal basis of $\Delta_n$. The following theorem can be found in \cite[Section 7]{stem} (see also \cite[Theorem 8.6]{hoff}).

        \begin{theorem}\label{image_spin_char_S_ntilde}
            For $\lambda \in \DP_n$ , $\displaystyle \och_n(\wtilde{\chi}_\lambda)=2^{\frac{-d(\lambda)}{2}}Q_\lambda^*$.
        \end{theorem}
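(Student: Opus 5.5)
The plan is the standard one for Schur's theorem: first identify $\och_n(\wtilde{\chi}_\lambda)$ up to a scalar, then fix the scalar by a norm and a positivity computation.

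\textbf{Step 1: norm of the image.} Since $\och_n(f)=\och_n(f^a)$, the image of $\wtilde{\chi}_\lambda$ equals half the image of $\wtilde{\chi}_\lambda+\wtilde{\chi}_\lambda^a$ when $\lambda\in\DP_n^1$. That sum lies in $R^0(\wtilde{S}_n)$.
\begin{itemize}
\item For $\lambda\in\DP_n^0$ we have $\wtilde{\chi}_\lambda\in R^0(\wtilde{S}_n)$. The isometry gives $\|\och_n(\wtilde{\chi}_\lambda)\|^2=\langle\wtilde{\chi}_\lambda,\wtilde{\chi}_\lambda\rangle=1$.
\item For $\lambda\in\DP_n^1$, the characters $\wtilde{\chi}_\lambda$ and $\wtilde{\chi}_\lambda^a$ are distinct irreducibles, so $\|\wtilde{\chi}_\lambda+\wtilde{\chi}_\lambda^a\|^2=2$. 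Hence $\|\och_n(\wtilde{\chi}_\lambda)\|^2=2/4=1/2$.
\end{itemize}
So in both cases $\|\och_n(\wtilde{\chi}_\lambda)\|^2=2^{-d(\lambda)}$, which matches the claimed factor $2^{-d(\lambda)/2}$.

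\textbf{Step 2: orthogonality of the images.} For distinct $\lambda,\mu\in\DP_n$, write $\chi^0_\lambda$ for $\wtilde{\chi}_\lambda$ or $\wtilde{\chi}_\lambda+\wtilde{\chi}_\lambda^a$ according to parity. These functions are sums of pairwise distinct irreducibles, so they are mutually orthogonal. Their images therefore form an orthogonal basis of $\Delta_n$, since there are $|\DP_n|=\dim\Delta_n$ of them.

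\textbf{Step 3: matching with $Q^*_\lambda$ (the main obstacle).} Orthogonality alone determines the basis only up to an orthogonal transformation, so Steps 1 and 2 do not suffice. Here I would follow Stembridge's construction \cite[Section 7]{stem}.
\begin{itemize}
\item Build spin characters by inducing products of basic characters $\wtilde{\chi}_{\lambda_1}\hat\otimes\cdots\hat\otimes\wtilde{\chi}_{\lambda_l}$ from the twisted Young subgroups $\wtilde{S}_{\lambda}$.
\item Show, using the explicit values of $\wtilde{\chi}_n$ on $\OP_n$ from \cite[Theorem 3.3]{stem} and the expansion $q_n=\sum 2^{l(\mu)}p_\mu/z_\mu$, that $\och_n(\wtilde{\chi}_n)=2^{-d((n))/2}q_n/\sqrt2$. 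This is a direct comparison of coefficients.
\item Show that $\och$ is multiplicative for the twisted induction product, up to explicit powers of $\sqrt2$. Then the induced characters map to scalar multiples of $q_\lambda$.
\item Apply the triangularity $q_\lambda=\sum_\mu k'_{\lambda\mu}2^{-l(\mu)}Q_\mu$, with leading term $Q_\lambda$ in dominance order. By induction on dominance order, $\och_n(\wtilde{\chi}_\lambda)$ is proportional to $Q_\lambda$ plus lower terms; orthogonality from Step 2 then kills the lower terms.
\end{itemize}
The delicate part is tracking the powers of $\sqrt2$ and the associate pairs in the twisted product.

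\textbf{Step 4: fixing the sign.} The scalar has absolute value $2^{-d(\lambda)/2}$ by Step 1. Its sign is positive because the coefficient of $p_{(1^n)}$ records the degree $\wtilde{\chi}_\lambda(1)>0$, and the corresponding coefficient of $Q_\lambda$ is positive as well: it is $k'_{\lambda,(1^n)}>0$ times positive constants.
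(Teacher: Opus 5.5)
The paper does not prove this statement; it quotes it from Stembridge (Section 7) and Hoffman--Humphreys (Theorem 8.6), and your outline uses Stembridge's framework. Steps 1, 2 and 4 are fine, and can even be run without the labelling: for any irreducible spin $\chi$, $\|\och_n(\chi)\|^2$ is $1$ or $\tfrac12$ according as $\chi$ is self-associate or not, and non-associate irreducibles have orthogonal images. In Step 4, the $p_{(1^n)}$-coefficient of $Q_\lambda$ is indeed $k'_{\lambda,(1^n)}/n!$, since $x_1\cdots x_n$ occurs only in $p_{(1^n)}$.

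The gap is the last bullet of Step 3, which is where the theorem actually lives. What is triangular is the image of the \emph{induced} character, $\och_n(\Psi_\lambda)=c_\lambda q_\lambda$, where $q_\lambda=Q_\lambda+{}$(terms $Q_\mu$ with $\mu$ strictly dominating $\lambda$). So the extra terms are higher, not lower. Nothing you wrote shows that any \emph{irreducible} constituent of $\Psi_\lambda$ has image in the span of those $Q_\mu$. After discarding the constituents already identified, the rest of $\Psi_\lambda$ has image $c_\lambda Q_\lambda$. A priori, though, it could be a sum of several new irreducibles whose individual images are not multiples of $Q_\lambda$. Orthogonality only performs Gram--Schmidt, which, as you yourself note at the start of Step 3, does not single out irreducibles. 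Relatedly, you never define $\wtilde\chi_\lambda$, and the labelling in the statement only acquires content through this theorem.

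To close the gap you need the exact scalar $c_\lambda=2^{-(l(\lambda)+d(\lambda))/2}$, not just ``some power of $\sqrt2$'', together with a norm count. The new part $\Psi'_\lambda$ is a genuine character with $\|\och_n(\Psi'_\lambda)\|^2=c_\lambda^2\,2^{l(\lambda)}=2^{-d(\lambda)}$. A self-associate constituent of multiplicity $m$ contributes $m^2$, and an associate pair with multiplicities $m,m'$ contributes $(m+m')^2/2$.
For $d(\lambda)=1$, this forces exactly one non-self-associate constituent, of multiplicity one.
For $d(\lambda)=0$, it leaves either one self-associate constituent of multiplicity one or two associate pairs. The latter is impossible: $\lambda$ then has an even number of even parts, so the reduced Clifford product, and hence $\Psi_\lambda$, is self-associate. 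That forces $m=m'\geq 1$ for any pair, giving a contribution of at least $2$.

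This argument defines $\wtilde\chi_\lambda$ and proves it is self-associate if and only if $d(\lambda)=0$. It also gives $\och_n(\wtilde\chi_\lambda)=c_\lambda Q_\lambda=2^{-d(\lambda)/2}Q_\lambda^*$, with the sign automatically positive, so Step 4 becomes unnecessary.

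Alternatively, you can follow Schur. Show that $2^{-d(\lambda)/2}Q_\lambda^*$ is the image of a virtual character, using integrality of $Q_\lambda$ in the $q_\mu$ with the right $2$-adic divisibility. Then use its norm and the fact that there are exactly $|\DP_n^1|$ associate pairs.
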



        \noindent When $\lambda\in \DP_n$ and $\mu \in \OP_n$, we have the analogue of Murnaghan-Nakayama rule:
        \begin{equation}\label{spin_char_value_inn_prod}
        \wtilde{\chi}_\lambda(\sigma_\mu)=2^{-\frac{d(\lambda)}{2}}\langle Q_\lambda^* , 2^{\frac{l(\mu)}{2}}p_\mu \rangle=2^{\frac{l(\mu)-l(\lambda)-d(\lambda)}{2}}\langle Q_\lambda, p_\mu \rangle.
        \end{equation}
        Similar to the  Murnaghan-Nakayama rule for computing the values of the irreducible characters of $S_n$, a rule for evaluating the values of the irreducible spin characters of $\wtilde{S}_n$ was given by Morris, which we discuss next.

    \subsection{Morris Recursion Rule}\label{Morris_recursion_rule_Sntilde}
        The well-known Murnaghan-Nakayama rule is a (recursive) formula to calculate the values of irreducible characters of the symmetric group $S_n$ (see \cite[Theorem 5.11.1]{ap}. In \cite[Theorem 2]{mo.64}, Morris gave a recursive formula to calculate $\wtilde{\chi}_{\lambda}(\sigma_{\mu})$ of the irreducible spin character $\wtilde{\chi}_{\lambda}$ of $\wtilde{S}_n$, where $\lambda \in \DP_n$ and $\mu\in \OP_n$. There are two ways to describe this recursive rule, one in terms of the strict partition $\lambda$ itself and the other in terms of the shifted diagram of $\lambda$. We briefly discuss this following \cite[Page 266]{mac}. For two strict partitions $\lambda,\mu$ such that $SD_{\mu}\subseteq SD_{\mu}$ (where $\subseteq$ refers to diagram containment), let $SD_{\lambda/\mu}$ denote the shifted skew diagram of shape $\lambda/\mu$. A \textit{border strip} (or a rim-hook) is a connected shifted skew diagram possessing no $2\times 2$ square. The height of a border strip $\alpha$ is one less than the number of rows present in it. It is denoted by $\h(\alpha)$. A \textit{double strip} is a shifted skew diagram formed by the union of two border strips that both end on the main diagonal. A double strip $\delta$ can be cut into two non-empty connected pieces, one piece $\alpha$ consisting of the diagonals in $\delta$ of length $2$, and the other piece $\beta$ consisting of the border strip formed by the diagonals of length $1$. The height of the double strip $\delta$, denoted by $\h(\delta)$, is defined to be $\frac{1}{2} |\alpha|+ \h(\beta).$ Let $\varepsilon(\delta)$ take the value $(-1)^{\h(\delta)}$ or $2(-1)^{\h(\delta)}$ according as $\delta$ is a border strip or a double strip. 
        
        \medskip

        \noindent Let $\lambda=(\lambda_1,\lambda_2,\cdots,\lambda_l)\in \DP_n$ and $r$ be a odd positive integer. We consider three types of removals of size $r$ from $\lambda$, each of which yields a strict partition of $n-r$. Each such removal can be interpreted in terms of the removal of a border strip or double strip from  $SD_{\lambda}$.

        \begin{enumerate}
            \item \textbf{Type-I}: there exists $i$ such that $\lambda_i>r$ and $\lambda_1,\ldots,\lambda_i-r, \ldots, \lambda_l$ are all distinct. Let $j$ be such that $\lambda_j>\lambda_i-r>\lambda_{j+1}$ (here we assume $\lambda_{l+1}=0$). Then we obtain the strict partition $\hat{\lambda}=(\lambda_1,\ldots,\lambda_{i-1},\lambda_{i+1},\ldots,\lambda_j,\lambda_i-r,\ldots)$ of $n-r$, by subtracting $r$ from the part $\lambda_i$. In terms of shifted diagrams, $SD_{\hat{\lambda}}$ is obtained from $SD_{\lambda}$ by removing a border strip of size $r$ which starts at the $i$-th row and ends at the $j$-th row. It is clear that $\varepsilon(SD_{\lambda/\hat{\lambda}})=(-1)^{j-i}$.

            \item \textbf{Type-II}: there exists $i$ such that $\lambda_i=r$. Then we obtain the strict partition $\hat{\lambda}=(\lambda_1,\ldots,\lambda_{i-1},\lambda_{i+1},\ldots)$ of $n-r$, by removing the part $\lambda_i$ entirely. In terms of shifted diagrams, $SD_{\hat{\lambda}}$ is obtained from $SD_{\lambda}$ by removing a border strip of size $r$ which starts at the $i$-th row and ends at the main diagonal of the last row. Once again it is clear that $\varepsilon(SD_{\lambda/\hat{\lambda}})=(-1)^{l-i}$.

            \item \textbf{Type-III}: there exists $i,j$ such that $r=\lambda_i+\lambda_j$. Then we obtain the strict partition $\hat{\lambda}=(\lambda_1,\ldots,\lambda_{i-1},\lambda_{i+1},\ldots,\lambda_{j-1},\lambda_{j+1},\ldots)$ of $n-r$, by removing both the parts $\lambda_i$ and $\lambda_j$ entirely. In terms of shifted diagrams, $SD_{\hat{\lambda}}$ is obtained from $SD_{\lambda}$ by the removal of the border strip that starts at the $j$-th row and ends at the main diagonal of the last row (this is just removing the part $\lambda_j$), followed by the removal of the border strip that starts at the $i$-th row and ends at the main diagonal of the last row (this is just removing the part $\lambda_i$). This is precisely the removal of a double strip of size $r$. It readily follows that $\varepsilon(SD_{\lambda/\hat{\lambda}})=(-1)^{j-i-1+\lambda_j}$.
        \end{enumerate}

        \noindent Let $I_{\lambda}(r)$ denote the set of all strict partitions $\hat{\lambda}$ of $n-r$ obtained from $\lambda$ by applying one of the three removal procedures. Now we are ready to state Morris recursion rule. Let $\lambda\in \DP_n$ and $\mu=(\mu_1,\ldots,\mu_k)\in \OP_n$. Then

        \begin{equation}
        \langle Q_{\lambda},p_{\mu}\rangle=\sum_{\hat{\lambda}\in I_{\lambda}(\mu_k)}\varepsilon(SD_{\lambda/\hat{\lambda}})\langle Q_{\hat{\lambda}},p_{\hat{\mu}} \rangle,
        \end{equation}
        where $\hat{\mu}=(\mu_1,\ldots,\mu_{k-1})$. Equivalently,

        \begin{equation}
        \langle Q_{\lambda},p_{\mu}\rangle=\sum_{S=(\lambda^{(0)},\ldots,\lambda^{(k)})}\varepsilon(S),
        \end{equation}
        summed over all sequences of strict partitions $S=(\lambda^{(0)},\ldots,\lambda^{(k)})$ such that $\emptyset=\lambda^{(0)}\subset \lambda^{(1)}\subset \ldots\subset \lambda^{(k)}=\lambda$, $SD_{\lambda^{(i)}/\lambda^{(i-1)}}$ is a border strip or double strip of size $\mu_{i}$ for all $1\leq i\leq k$, and $\varepsilon(S)=\prod_{i=1}^{k} \varepsilon(SD_{\lambda^{(i)}/\lambda^{(i-1)}})$. Using Equation~\ref{spin_char_value_inn_prod}, we get

        \begin{align*}
           \wtilde{\chi}_\lambda(\sigma_\mu)&=2^{\frac{l(\mu)-l(\lambda)-d(\lambda)}{2}}\sum_{\hat{\lambda}\in I_{\lambda}(\mu_k)}\varepsilon(SD_{\lambda/\hat{\lambda}})\langle Q_{\hat{\lambda}},p_{\hat{\mu}}\rangle =2^{\frac{l(\mu)-l(\lambda)-d(\lambda)}{2}}\sum_{S=(\lambda^{(0)},\ldots,\lambda^{(k)})}\varepsilon(S).
        \end{align*}

        \medskip

        \noindent Finally, we can also state the rule in terms of shifted border strip tableaux. Let $\lambda\in \DP_n$ and $\mu=(\mu_1,\ldots,\mu_k)\in \OP_n$.  A \textit{ shifted border-strip tableaux} (abbrv. SBST) of shape $\lambda$ and type $\mu$ is a filling of the shifted diagram $SD_{\lambda}$ such that  (a) $i$ appears $\mu_i$ times, (b) the entries are weakly increasing in both rows and columns, and (c) the skew diagram $\delta_i$ corresponding to the entries $i$ is either a border strip or a double strip for every $1\leq i\leq k$. Let us denote the set of all shifted border strip tableaux of shape $\lambda$ and weight $\mu$ by $\text{SBST}(\lambda,\mu)$. 
        \begin{equation}\label{Morrris_rule_Sntilde}
            \wtilde{\chi}_\lambda(\sigma_\mu)=2^{\frac{l(\mu)-l(\lambda)-d(\lambda)}{2}}\sum_{T\in \mbox{SBST}(\lambda,\mu)}\varepsilon(T),
        \end{equation}
        where $\displaystyle \varepsilon(T):=\prod_{i=1}^{k}\varepsilon(\delta_i)$. We mention that when $\mu \in \DP_n^1$, then the value of $\wtilde{\chi}_{\lambda}(\sigma_{\mu})$ is completely determined (see \cite[Theorem 8.7]{hoff}).

\section{Irreducible spin Representations of \texorpdfstring{$G\wr \wtilde{S}_n$}{G wr Sn}}\label{irreps_of_GwrSntilde}
    In this section, we discuss the conjugacy classes and complex irreducible spin characters of the group $G\wr \wtilde{S}_n$, where $G$ is any finite group. We closely follow \cite{fren}. Given a positive integer $n$, let $G^n=G\times \cdots \times G$ be the $n$-th direct product of $G$. $\wtilde{S}_n$ acts on $G^n$ via the permutation action of $S_n$, that is,
    \begin{equation}
        t_{\sigma}(g_1,\cdots,g_n)=(g_{\sigma^{-1}(1)},\cdots,g_{\sigma^{-1}(n)}),
    \end{equation}
    where $t_{\sigma}=\theta^{-1}(\sigma)$, and $z$ acts trivially on $G^n$, that is,
    \begin{equation}
        z(g_1,\cdots,g_n)=(g_1.\cdots, g_n).
    \end{equation}
    This yields the spin wreath product $\wtilde{G}_n:=G\wr \wtilde{S}_n$. Clearly, $\wtilde{G}_n$ is a central extension of $G_n:=G\wr S_n$ by $\mathbb{Z}/2\mathbb{Z}$, giving a short exact sequence
    $$\displaystyle 1\rightarrow \mathbb{Z}/2\mathbb{Z}\xrightarrow{i}\wtilde{G}_n \xrightarrow{\wtilde{\theta}} G_n \xrightarrow{i}1,$$
    where $\wtilde{\theta}:\wtilde{G}_n\to G_n$ is defined by $(g_1,g_2,\ldots,g_n,x)\mapsto (g_1,g_2,\ldots,g_n,\theta(x))$. We have $|\wtilde{G}_n|=2n!|G|^n$. Recall that $d$ is the parity homomorphism from the spin group $\wtilde{S}_n$ to the group $\mathbb{Z}_2$ by
    \begin{equation}
        d(t_i)=1\ (i=1,\cdots, n-1),\ d(z)=0.
    \end{equation}
    This parity map extends trivially to $\wtilde{G}_n$ by  $d(g,t_i)=1\ (i=1,\cdots,n-1),\ d(g,z)=0.$

    \subsection{Partition-valued functions and conjugacy classes of \texorpdfstring{$G\wr\wtilde{S}_n$}{G wr Sn}} \label{conj_classes_GwrSntilde}
    
    Let $X$ be any finite set. Let $\p(X)$ be the set of all partition-valued functions on $X$. If $\rho \in \p(X)$, let $\displaystyle ||\rho||=\sum_{c\in X}|\rho(c)|$. Let $\displaystyle l(\rho):=\sum_{x\in X}l(\rho(x))$ denote the length of $\rho$. Let $\p_n(X)$ denote the set of all partition-valued functions $\rho$ on $X$ such that $||\rho||=n$. Let $\OP(X)$ denote the set of all partition-valued functions $(\rho(x))_{x\in X}$ such that $\rho(x)\in \OP$ for every $x\in X$. Let $\DP(X)$ be the set of all  partition-valued functions $(\rho(x))_{x\in X}$ such that $\rho(x)\in \DP$ for every $x\in X$. We set $\OP_n(X):=\p_n(X)\cap \OP(X)$ and $\DP_n(X):=\p_n(X)\cap \DP(X)$.
    
    \medskip
    
    \noindent For a partition-valued function $\rho=(\rho(x))_{x \in X}$ on $X$, let $d(\rho):=||\rho||-l(\rho)\;(\Mod\;2)$ denote the parity of $\rho$. For $i=0,1$, define
    \begin{equation}
        \p^i_n(X)=\{\rho \in \p_n(X)\ |\ d(\rho)=i\},
    \end{equation}
    Then $\rho$ is called even if it is an element of $\p^0_n(X)$, otherwise it is called odd. For $i=0,1$, let 
    \begin{equation}
        \DP^i_n(X)=\p^i_n(X) \cap \DP_n(X).
    \end{equation}
    \begin{lemma}\cite[Lemma 2.3]{fren}
        We have $|\OP_n(X)|=|\DP_n(X)|$.
    \end{lemma}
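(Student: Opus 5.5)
We reduce the statement to Euler's classical identity $|\OP_m|=|\DP_m|$ for ordinary partitions and then count partition-valued functions by how $n$ is split among the points of $X$.

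A partition-valued function $\rho\in\p_n(X)$ is the same as a choice of nonnegative integers $(n_x)_{x\in X}$ with $\sum_{x\in X}n_x=n$, together with a partition $\rho(x)\vdash n_x$ for each $x\in X$. The conditions defining $\OP_n(X)$ and $\DP_n(X)$ are imposed coordinatewise. Grouping by the composition $(n_x)_{x\in X}$ therefore gives
\[
|\OP_n(X)|=\sum_{\substack{(n_x)_{x\in X}\\ \sum_x n_x=n}}\ \prod_{x\in X}|\OP_{n_x}|,\qquad
|\DP_n(X)|=\sum_{\substack{(n_x)_{x\in X}\\ \sum_x n_x=n}}\ \prod_{x\in X}|\DP_{n_x}|.
\]
By Euler's theorem $|\OP_m|=|\DP_m|$ for every $m\ge 0$ (with $|\OP_0|=|\DP_0|=1$), so the two sums agree term by term.

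Euler's theorem can be quoted. It is also implicit in Section~\ref{spin_char_Sntilde}, where $|\OP_n|+|\DP_n^1|=|\DP_n|+|\DP_n^1|$ is used. For completeness one can prove it either way:
\begin{itemize}
\item[(a)] By generating functions:
\[
\prod_{k\ge1}(1+t^k)=\prod_{k\ge1}\frac{1-t^{2k}}{1-t^k}=\prod_{k\text{ odd}}\frac{1}{1-t^k}.
\]
\item[(b)] By Glaisher's bijection, which repeatedly merges two equal parts into one part of twice the size.
\end{itemize}

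Equivalently, in generating-function form the whole statement reads
\[
\sum_n|\OP_n(X)|t^n=\Bigl(\prod_{k\text{ odd}}(1-t^k)^{-1}\Bigr)^{|X|}=\Bigl(\prod_k(1+t^k)\Bigr)^{|X|}=\sum_n|\DP_n(X)|t^n.
\]

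There is no real obstacle. The only point needing care is that the decomposition by $(n_x)_{x\in X}$ is a genuine bijection. This holds because the empty partition is allowed at each point of $X$, so every composition of $n$ indexed by $X$ occurs on both sides.
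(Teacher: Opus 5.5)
Your proof is correct: grouping by the composition $(n_x)_{x\in X}$ reduces the statement coordinatewise to Euler's identity $|\OP_m|=|\DP_m|$, equivalently to raising $\prod_{k\ge1}(1+t^k)=\prod_{k\ \mathrm{odd}}(1-t^k)^{-1}$ to the power $|X|$. The paper gives no proof of its own and simply cites \cite[Lemma 2.3]{fren}; your argument is the standard one for this fact.
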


    In order to parameterize the conjugacy classes of $\wtilde{G}_n$, we first describe the conjugacy classes of $G_n:=G\wr S_n$. Let $\C(G)=\{c_1,c_2,\cdots, c_s\}$ be a labeling of the conjugacy classes of $G$. Let $x=(g_1,\cdots,g_n,\pi)\in G_n$ and $\pi=\pi_1\cdots\pi_k$ be a disjoint cycle decomposition of $\pi$. If $\pi_i=(i_1,i_2,\ldots,i_t)$, then $g_i(x)=g_{i_t}g_{i_{t-1}}\cdots g_{i_1}\in G$ is uniquely determined up to conjugation in $G$. We say that $g_i(x)$ is a cycle product of $g$ corresponding to the cycle $\pi_i$. This allows us to naturally associate $x\in G_n$ with a partition-valued function $\rho$ on $\C(G)$ as follows: $c_j$ maps to a partition $\lambda=\langle 1^{m_1},\cdots,i^{m_i}, \cdots \rangle$ if and only if there exist $m_i$ cycles of length $i$ in the disjoint cycle decomposition of $\pi$ whose corresponding cycle products belong to $c_j$. It is trivially assumed that if there is no cycle in the disjoint cycle decomposition of $\pi$ whose cycle product lies in $c_j$, then $c_j$ maps to the empty partition of $0$. Since $\pi \in S_n$, it is clear that $||\rho||=n$. We call $\rho$ the type of $x$ and denote it by $\text{ty}(x)$. It is then a standard result that if $x,y\in G_n$, then $x$ and $y$ are conjugate in $G_n$ if and only if $\text{ty}(x)=\text{ty}(y)$. Of course, it is easy to see that if $\rho \in \p_n(\C(G))$, then there exists $x\in G_n$ such that $\text{ty}(x)=\rho$. Thus, the conjugacy classes of $G_n$  are parametrized by partition-valued functions $\rho$ on $\C(G)$ such that $||\rho||=n$. For $\rho \in \p_n(\C(G))$, let $\C_{\rho}$ be the conjugacy class of $G_n$ indexed by $\rho$.

    \medskip

    \noindent Let $\rho \in \p_n(\C(G))$. It is easy to see that the pre-image $\wtilde{\theta}^{-1}(\C_{\rho})$ is either a conjugacy class of $\wtilde{G}_n$ (which is the case if $x$ and $zx$ are conjugate in $\wtilde{G}_n$ for every $x\in \wtilde{\theta}^{-1}(\C_{\rho})$), or splits into two conjugacy classes of $\wtilde{G}_n$ (which is the case if $x$ and $zx$ are not conjugate in $\wtilde{G}_n$ for every $x\in \wtilde{\theta}^{-1}(\C_{\rho})$). The following theorem determines the splitting criteria of $\wtilde{\theta}^{-1}(\C_{\rho})$.

    
    \begin{theorem}\cite[Theorem 2.5]{fren}\label{split_classes_GwrSntilde}
        Let $\rho\in \p_n(\C(G))$ and $\C_{\rho}$ denote the conjugacy class of $G_n$ indexed by $\rho$. Then the pre-image $\wtilde{\theta}^{-1}(\C_{\rho})$ splits into two conjugacy classes in $\wtilde{G}_n$ if and only if $\rho \in \OP_n(\C(G))\cup\DP^1_n(\C(G))$.
   \end{theorem}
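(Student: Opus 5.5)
Fix $x=(g,\sigma)\in\wtilde{G}_n$ with $\wtilde{\theta}(x)\in\C_\rho$. The class $\wtilde{\theta}^{-1}(\C_\rho)$ splits exactly when no element of $\wtilde{G}_n$ conjugates $x$ to $zx$. Equivalently, every $y$ in the centralizer of $\wtilde{\theta}(x)$ in $G_n$ must lift to an element commuting with $x$, and not with $zx$. For a lift $\tilde y$ of such a $y$ we have $\tilde y x\tilde y^{-1}=z^{\epsilon(y)}x$, and $\epsilon:C_{G_n}(\wtilde{\theta}(x))\to\mathbb{Z}/2$ is a homomorphism. So the plan is to compute $\epsilon$ on generators of the centralizer and show it is trivial if and only if $\rho\in\OP_n(\C(G))\cup\DP^1_n(\C(G))$.

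Choose a convenient representative: $\sigma$ has cycles of lengths given by the parts of all $\rho(c)$, each cycle carrying $g$-entries $(h,1,\dots,1)$ with $h$ a fixed representative of $c$. Since $G^n$ commutes with $z$, conjugating $x$ by $(k,1)\in G^n$ only changes the $G^n$-coordinates. In the wreath product the commutation relations in the $\wtilde{S}_n$-part are unaffected, so the cocycle comes entirely from $\wtilde{S}_n$. The centralizer of $\wtilde{\theta}(x)$ in $G_n$ is generated by three kinds of elements:
\begin{enumerate}
\item elements supported within a single cycle: a $G$-valued element in the centralizer of $h$, placed diagonally along the cycle, together with the cycle itself;
\item permutations swapping two equal-length cycles of the same class $c$, combined with the matching $G$-part;
\item nothing else beyond products of these.
\end{enumerate}

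For each generator, compute $\epsilon$ using the known $\wtilde{S}_n$ relations (the computation behind Theorem~\ref{split_classes_Sntilde}). The pure $G$-parts have trivial image in $\wtilde{S}_n$, so they give $\epsilon=0$. A lift of a cycle $\pi_j$ of length $m$ commutes with a lift of the product $\sigma$ up to $z^{(m-1)(d(\sigma)-1)}$; this is the standard rule that odd-parity elements anticommute when their supports are disjoint. Swapping two cycles of equal length $m$ gives $z^{m}$ (up to the usual sign convention), which is trivial iff $m$ is odd.

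Now combine the generators. Suppose all parts are odd. Each cycle lift is even, and the swaps have odd $m$, so $\epsilon\equiv0$ and the class splits. Suppose $d(\rho)=1$ and all parts of each $\rho(c)$ are distinct. Then there are no swaps within a single $c$. Swapping cycles from different classes $c\ne c'$ does not centralize $\wtilde{\theta}(x)$, since the cycle products differ. Each cycle lift then satisfies $\epsilon=(m-1)(d(\sigma)-1)=0$ because $d(\sigma)=1$, so the class splits. Conversely, if $\rho$ has an even part and $d(\rho)=0$, the even cycle gives $\epsilon=(m-1)\cdot1=1$. If $\rho$ has a repeated part $m$ in some $\rho(c)$ with $m$ even, the swap gives $\epsilon=1$. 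The remaining case is a repeated even part with $d(\rho)=1$; there the swap of the two equal even cycles (of the same class $c$) already gives $\epsilon=1$. In every such case the class fuses.

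The main obstacle is the careful cocycle bookkeeping. One must confirm that the swap of two $m$-cycles carrying identical $G$-data lifts with sign $(-1)^m$, and verify that the $G^n$-components never interfere. I would reduce both checks to the case $G=1$, which is exactly Theorem~\ref{split_classes_Sntilde}. The key observation is that the centralizer in $G_n$ maps into the centralizer of $\theta(\sigma)$ in $S_n$, restricted to permutations preserving the class labels. Its image is generated by the individual cycles and by swaps of same-label equal-length cycles, so $\epsilon$ is the restriction of the $S_n$-homomorphism analysed there.
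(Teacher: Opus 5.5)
The paper quotes this result from Frenkel--Jing--Wang without proof, so I assess your argument on its own.

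\textbf{What is right.} Your framework is correct. Conjugation acts on the $\wtilde{S}_n$-coordinate by ordinary conjugation, and $G^n$ commutes with $z$. So the obstruction $\epsilon$, defined by a lift $\tilde y$ sending $x$ to $z^{\epsilon(y)}x$, factors through the group $H$ of label-preserving permutations centralizing $\theta(\sigma)$. (Your phrase ``commutes with $x$ but not with $zx$'' cannot be meant literally, since $z$ is central.) $H$ is generated by the individual cycles and by swaps of equal-length cycles carrying the same label. Your value $\epsilon(\pi_j)\equiv(m-1)(d(\sigma)-1)$ for a cycle is correct.

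\textbf{The gap is the swap.} Your claim ``$z^{m}$, trivial iff $m$ is odd'' is self-contradictory. In fact no function of $m$ alone is correct. Write $\sigma=\sigma_a\sigma_b\sigma'$ and let $\tau$ lift the swap $\prod_i(a_i\,b_i)$, so $d(\tau)\equiv m$.
\begin{itemize}
\item Since $\tau^2\in\{1,z\}$ is central, $\tau\sigma_a\tau^{-1}=z^u\sigma_b$ forces $\tau\sigma_b\tau^{-1}=z^u\sigma_a$.
\item Hence $\tau\sigma_a\sigma_b\tau^{-1}=\sigma_b\sigma_a=z^{(m-1)^2}\sigma_a\sigma_b$.
\item Moving $\tau$ past $\sigma'$ costs $z^{m\,d(\sigma')}$, with $d(\sigma')\equiv d(\sigma)$.
\end{itemize}
Therefore
\[
\epsilon(\mathrm{swap})\equiv (m-1)+m\,d(\sigma)\pmod 2,
\]
which vanishes exactly when $m$ is odd and $d(\sigma)=0$.

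\textbf{This breaks your converse.} Your cases are (even part, $d(\rho)=0$), (repeated even part), and a ``remaining case'' that merely repeats the second. You never treat $d(\rho)=1$ with all repeated parts odd, e.g.\ $G=1$ and $\rho=(2,1,1)$ or $(3,3,2)$. There every cycle has $\epsilon=0$, and by your swap rule so does every swap. Your argument would therefore predict splitting, contradicting Theorem~\ref{split_classes_Sntilde}; concretely, $t_3t_1t_3^{-1}=zt_1$ in $\wtilde{S}_4$.

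With the corrected formula, the converse is cleanest when organized by parity. Outside $\OP_n(\C(G))\cup\DP^1_n(\C(G))$, one of two things holds:
\begin{itemize}
\item $d(\rho)=0$ and some part $m$ is even; that cycle gives $\epsilon=1$.
\item $d(\rho)=1$, which already rules out $\OP_n(\C(G))$, and some $\rho(c)$ has a repeated part $m$ of either parity; that swap gives $\epsilon=(m-1)+m\equiv1$.
\end{itemize}
Your forward direction survives the correction. In the $\OP$ case $d(\sigma)=0$ and all $m$ are odd, and in the $\DP^1$ case $H$ contains no swaps.

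\textbf{A smaller point.} ``Reduce to Theorem~\ref{split_classes_Sntilde}'' must mean reducing to these generator computations in $\wtilde{S}_n$, not to the theorem's statement. The reason is that $H$ can be a proper subgroup of the centralizer. For example, $\rho(c_1)=\rho(c_2)=\rho(c_3)=(2)$ gives a split class, although $(2,2,2)$ is non-split in $\wtilde{S}_6$.
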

   
    \noindent Let $\rho \in \p_n(\C(G))$ and set $\D_{\rho}:=\wtilde{\theta}^{-1}(\C_{\rho})$. If $\rho \notin \OP_n(\C(G))\cup \DP_n^1(\C(G))$, then $\D{\rho}$ is a called a non-split conjugacy class of $\wtilde{G}_n$. Otherwise, $\D_{\rho}$ splits into two conjugacy classes in $\wtilde{G}_n$, called split classes. Thus, the number of conjugacy classes of $\tilde{G}_n$ is $|\p_n(\C(G))|+|\OP_n(\C(G))|+|\DP_n^1(\C(G))|$. Recall that for a partition $\lambda \vdash n$, $z_{\lambda}$ is the order of the centralizer of an element $x\in \C_{\lambda}$ in $S_n$. Let $\rho\in \p_n(\C(G))$.  It is well-known that 
    $$z_\rho:=\prod_{c \in \C(G)}z_{\rho(c)}\zeta_c^{l(\rho(c))},$$
    is the order of the centralizer of an element $x\in \C_{\rho}$ in $G_n$, where $\zeta_c$ denotes the order of the centralizer of an element of the conjugacy class $c$ of $G$.

    \begin{proposition}\cite[Proposition 2.6]{fren}\label{centralizer_split_classes_GwrSntilde}
        Let $\rho \in \p_n(\C(G))$. The order of the centralizer of an element $x\in \D_{\rho}$ of $\wtilde{G}_n$ is
        \[\wtilde{z_{\rho}}=\begin{cases} 
            2z_{\rho} & \text{if $\D_{\rho}$ splits, that is, $\rho \in \OP_n(\C(G))\cup \DP_n^{1}(\C(G))$}\\
            z_{\rho} & \text{otherwise}.
            \end{cases}
        \]
    \end{proposition}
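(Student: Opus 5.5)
The natural approach is orbit–stabilizer: the centralizer order of $x\in\D_\rho$ in $\wtilde{G}_n$ is $|\wtilde{G}_n|$ divided by the size of the $\wtilde{G}_n$-conjugacy class of $x$. We know $|\wtilde{G}_n|=2|G_n|$ and $|\C_\rho|=|G_n|/z_\rho$, with $|\D_\rho|=2|\C_\rho|$ because $\wtilde{\theta}$ is two-to-one. So everything reduces to knowing whether the class of $x$ is all of $\D_\rho$ or half of it.

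The steps, in order:
\begin{enumerate}
\item Record $|\D_\rho|=|\wtilde{\theta}^{-1}(\C_\rho)|=2|\C_\rho|=2|G_n|/z_\rho$.
\item Note that $\wtilde{\theta}$ maps the $\wtilde{G}_n$-class of $x$ onto $\C_\rho$. Conjugating $x$ by $y$ and by $zy$ gives the same element, since $z$ is central. Hence the class of $x$ is a union of fibres meeting $\{x',zx'\}$ for every $x'$ lying over $\C_\rho$.
\item Apply Theorem~\ref{split_classes_GwrSntilde}, together with the dichotomy stated just before it.
 \begin{itemize}
 \item If $\rho\notin\OP_n(\C(G))\cup\DP_n^1(\C(G))$, the class of $x$ is all of $\D_\rho$. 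Its size is $2|G_n|/z_\rho$, so the centralizer has order $2|G_n|\cdot z_\rho/(2|G_n|)=z_\rho$.
 \item If $\rho$ lies in that set, $x$ and $zx$ are not conjugate, so $\D_\rho$ is the disjoint union of the classes of $x$ and of $zx$. These are swapped by multiplication by $z$, so they have equal size $|G_n|/z_\rho$. The centralizer then has order $2|G_n|z_\rho/|G_n|=2z_\rho$.
 \end{itemize}
\end{enumerate}

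An equivalent route goes through the centralizer directly. Let $H=\wtilde{\theta}^{-1}(C_{G_n}(\wtilde\theta(x)))$, which has order $2z_\rho$. The map $y\mapsto yxy^{-1}x^{-1}\in\{1,z\}$ is a homomorphism on $H$, and its kernel is $C_{\wtilde{G}_n}(x)$. The centralizer is therefore all of $H$ or an index-$2$ subgroup, according as the class splits or not. I would present the orbit-counting version and mention this one as a check.

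The main obstacle is the statement that the splitting behaviour is uniform over $\D_\rho$, namely that either every $x$ in $\D_\rho$ is conjugate to $zx$ or none is. This follows because conjugating the pair $\{x,zx\}$ carries it to any other fibre over $\C_\rho$, and the paper already records this before the cited theorem. The substantive input, deciding which $\rho$ split, is exactly Theorem~\ref{split_classes_GwrSntilde}, which I take as given. Note also that $z_\rho$ is the centralizer order in $G_n$, as stated just before the proposition.
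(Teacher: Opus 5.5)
Your argument is correct. The paper gives no proof of its own here; it only cites \cite[Proposition 2.6]{fren}. Your orbit--stabilizer deduction from Theorem~\ref{split_classes_GwrSntilde}, together with the dichotomy stated just before it, is the routine consequence that the placement of the proposition intends. Your alternative check via the commutator homomorphism on $H=\wtilde{\theta}^{-1}(C_{G_n}(\wtilde{\theta}(x)))$ is also sound.

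One wording fix is needed in step 2. In the split case the class of $x$ is not a union of fibres, since it contains exactly one point of each fibre $\{x',zx'\}$. Say instead that it meets every fibre over $\C_\rho$, which is all your case analysis actually uses.
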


    \noindent For $\rho \in \OP_n(\C(G))\cup \DP_n^1(\C(G))$, we label the two split classes as follows: Let $(h_1,h_2,\ldots, h_n,\pi)\in \C_{\rho}$ and assume that $\ty(\pi)=\mu\in \p_n$. Clearly, $\mu$ is a decreasing rearrangement of $\cup_{i}\rho(c_i)$. We write $\mu=(\mu^i_c)_{c\in \C(G)}$, where $\mu^{i}_c$ is a part of $\rho(c)$. There exists an element $x=(g_1,g_2,\ldots,g_n, \sigma_{\mu})\in \wtilde{G}_n$ such that $\wtilde{\theta}(x)\in \C_{\rho}$, that is, $x\in \D_{\rho}$. Here $\sigma_{\mu}\in \wtilde{S}_n$ is as defined in Section \ref{conj_classes_Sntilde}. We label the conjugacy class of $x$ as $\D^+_\rho$ and the conjugacy class of $zx$ as $\D_{\rho}^{-}$. We denote the element $x$ by $\sigma_{\rho}$.

    \subsection{Irreducible spin characters of \texorpdfstring{$\wtilde{G}_n$}{G	extasciitilde{}n}} We now briefly describe the irreducible spin representations (and characters) of $\wtilde{G}_n$. 
    
        \begin{definition}
            An irreducible representation $\varphi$ of $\wtilde{G}_n$ is called spin if $\varphi(z)=-I$. Let $(-1)^d$ be the sign representation of $\wtilde{G}_n:\wtilde{x}\longrightarrow (-1)^{d(\wtilde{x})}$. When $(-1)^d\varphi \simeq \varphi$, we call $\varphi$ a double spin representation of $\wtilde{G}_n$. Otherwise, $\varphi$ and $\varphi^a:=(-1)^d\varphi$ are called a pair of associate spin representations of $\wtilde{G}_n$.
        \end{definition}
    
        \noindent Let $\wtilde{G}_m\wtilde{\times}\wtilde{G}_n$ be the cartesian product of $\wtilde{G}_m$ and $\wtilde{G}_n$ with the following twisted multiplication
        $$(t,t').(s,s')=(tsz^{d(t')d(s)},t's'),$$
        where $s,t\in \wtilde{G}_m$ and $s',t'\in \wtilde{G}_n$. $\wtilde{G}_m\wtilde{\times}\wtilde{G}_n$ is a group and $|\wtilde{G}_m\wtilde{\times}\wtilde{G}_n|=|\wtilde{G}_m||\wtilde{G}_n|$. Define the \textit{spin product} of $\wtilde{G}_m$ and $\wtilde{G}_n$ by
        $$\wtilde{G}_m\hat{\times}\wtilde{G}_n=\wtilde{G}_m\wtilde{\times}\wtilde{G}_n/\{(1,1),(z,z)\},$$
        which can be embedded into the spin wreath group $\wtilde{G}_{m+n}$ canonically by letting
        $$((g,t_i'),1)\mapsto (g,t_i),\ (1,(g,t_j^{''}))\mapsto (g,t_{n+j}),$$
        where $i=1,\cdots,m-1$ and $j=1,\cdots,n-1$. We identify $\wtilde{G}_m\hat{\times}\wtilde{G}_n$ with its image in $\wtilde{G}_{m+n}$ and regard it as a subgroup of $\wtilde{G}_{m+n}$. 

        \medskip
        
        \noindent For any two irreducible spin representation $\varphi$ and $\varphi'$ with representation spaces $U$ and $V$ of $\wtilde{G}_m$ and $\wtilde{G}_n$, the \textit{super (outer) tensor product} $\varphi\widehat{\tensor}\varphi'$ of $\wtilde{G}_m\hat{\times}\wtilde{G}_n$ with representation space $U\otimes V$ is defined by
        $$(t,s).(u\otimes v)=(-1)^{d(s)d(u)}(\varphi(t)(u)\otimes \varphi'(s)(v)),$$
        where $t\in \wtilde{G}_m,\ s\in \wtilde{G}_n$ and $u\in U,\ v\in V$. It is immediate that
        $$(z',z^{''}).(u\otimes v)=(-u)\otimes (-v)=u\otimes v,\;\;(z,1).(u\otimes v)=-(u\otimes v).$$
        The irreducibility of  $\varphi\widehat{\tensor}\varphi'$ is discussed in \cite[Proposition 3.7]{fren}.
        \medskip
        
        \noindent Let $\irr(G)$ denote the set of all complex irreducible characters of $G$ and we fix a labelling $\{\gamma_{1},\cdots, \gamma_{s}\}$ of $\irr(G)$. Recall that $\wtilde{\chi}_{\lambda}$ is the irreducible spin  character of $\wtilde{S}_n$ indexed by the strict partition $\lambda\vdash n$. Let $\varphi$ be an irreducible representation of $G$ with representation space $V$ and character $\gamma$.  If the irreducible character $\wtilde{\chi}_{\lambda}$ of $\wtilde{S}_n$ is afforded by the representation $\psi_{\lambda}$ with representation space $\wtilde{V}_{\lambda}$, then a spin  representation of $\wtilde{G}_n$ can be obtained with representation space $T^n(V):=\underbrace{V\tensor V\tensor \cdots \tensor V}_{n\;\text{times}}\tensor \wtilde{V}_{\lambda}$ with the following action: If $x=(g_1,\ldots,g_n,t_{\pi})\in \wtilde{G}_n$ such that $\wtilde{\theta}(x)=(g_1,g_2,\ldots, g_n,\pi)\in G_n$ , then $x.(v_1\tensor v_2 \tensor\cdots\tensor v_n \tensor u):=\gamma(g_1)(v_{\pi^{-1}(1)})\tensor \cdots \tensor \gamma(g_n)(v_{\pi^{-1}(n)})\tensor \psi_{\lambda}(t_{\pi})(u)$. This is an irreducible spin representation of $\wtilde{G}_n$. We denote it by $\widetilde{\tensor^n}\varphi\tensor \psi_{\lambda}$, and its character by $\wtilde{\chi}_\lambda(\gamma)$, whose values are given in \cite[Proposition 4.4]{fren} . Recall that $\wtilde{\chi}_n$ is the basic spin character of $\wtilde{S}_n$. Thus, if $\lambda=(n)$,  we call $\wtilde{\chi}_n(\gamma)$ a basic spin character of $\wtilde{G}_n$ corresponding to the irreducible character $\gamma$ of $G$. In particular, if $\gamma$ is trivial, we note that $\chi_n(\gamma)(\sigma_{\rho})$ is positive when $\rho \in \OP_n(\C(G))$ (see \cite[Corollary 4.5]{fren}).

        \medskip
        
        \noindent More generally, if $\rho \in \DP_n(\irr(G))$ (with the fixed labelling of $\irr(G)$), and $\rho(\gamma_i)=\lambda^i\vdash n_i$ for $1\leq i\leq s$, then an irreducible spin representation (or spin character) of $\wtilde{G}_n$ can be constructed the following way: for $1\leq i\leq s$, let $\varphi_i$ be the irreducible representation of $G$ that affords $\gamma_i$ and let $\psi_{\lambda^i}$ be the irreducible spin representation of $\wtilde{S_{n_i}}$ indexed by the strict partition $\lambda_i$. Then, $\widetilde{\tensor^{n_i}} \varphi_i\tensor \psi_{\lambda^i}$ is an irreducible spin representation of $\wtilde{G}_{n_i}$. The super tensor product of these spin representations, that is, $\overset{i}{\widehat{\tensor}}(\widetilde{\tensor^n} \varphi_i \tensor \psi_{\lambda^i})$ yields a spin representation of $\hat{\times}_i\wtilde{G}_{n_i}:=\wtilde{G}_{n_1}\hat{\times} \cdots \hat{\times} \wtilde{G}_{n_s}$. This splits into $[m/2]$ copies of a single irreducible spin representation of $\hat{\times}_i\wtilde{G}_{n_i}$, where $m=|\{i\mid d(\lambda^i)=1\}|$ (see \cite[Proposition 9.8]{fren}). We denote this representation by $\Phi_{\rho}$. Finally, the induction of $\Phi_{\rho}$ to $\wtilde{G}_n$ (recall that $\displaystyle \sum_{i=1}^{s}n_i=n$), that is, $\psi_{\rho}:=\ind_{\hat{\times}_{i}\wtilde{G}_{n_i}}^{\wtilde{G}_n} \Phi_{\rho}$ yields a representation $\wtilde{G}_n$. These turn out to be inequivalent irreducible representations (in other words, corresponding irreducible characters are distinct) of $\wtilde{G}_n$ corresponding to distinct strict partition-valued functions $\rho$. Thus, we get distinct irreducible spin representations of $\wtilde{G}_n$ that are indexed by elements of  $\DP_n(\irr(G))$ (with the fixed labeling of $\irr(G)$). Given $\rho\in \DP_n(\irr(G))$, let $\wtilde{\chi}_{\rho}$ be the character of $\psi_{\rho}=\ind_{\hat{\times}_{i}\wtilde{G}_{n_i}}^{\wtilde{G}_n} \Phi_{\rho}$ indexed by $\rho$. We have the following theorem.
        
        \begin{theorem}\cite[Theorem 9.7]{fren}
            There exists an irreducible double spin representation $\psi_{\rho}$ of $\wtilde{G}_n$ corresponding to each partition-valued function $\rho \in \DP_n^{0}(\irr(G))$, while there is a pair of irreducible associate spin representations $(\psi_{\rho}, \psi_{\rho}^a)$ of $\wtilde{G}_n$ corresponding to each $\rho \in \DP_n^1(\irr(G))$. Thus, there is a total of $|\DP_n(\irr(G))|+|\DP_n^1(\irr(G))|$ irreducible spin representations of $\wtilde{G}_n$.
         \end{theorem}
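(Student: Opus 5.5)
The plan is to prove this classification by the standard Clifford-theoretic induction through the spin product subgroup, organized around counting. The first step is to show that each $\psi_\rho=\ind_{\hat{\times}_{i}\wtilde{G}_{n_i}}^{\wtilde{G}_n}\Phi_\rho$ is irreducible. I will do this in two stages.

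\begin{enumerate}
\item Irreducibility of the building blocks. For each $i$, $\widetilde{\tensor^{n_i}}\varphi_i\tensor\psi_{\lambda^i}$ is irreducible on $\wtilde{G}_{n_i}$. Restricted to the normal subgroup $G^{n_i}$, it is isotypic of type $\varphi_i^{\tensor n_i}$. The action of $\wtilde{S}_{n_i}$ on the multiplicity space is exactly $\psi_{\lambda^i}$, so the standard Clifford argument gives irreducibility.
\item Irreducibility after induction. I restrict $\psi_\rho$ to $G^n$ and use Mackey. The inertia group of the $G^n$-irreducible $\bigotimes_i\varphi_i^{\tensor n_i}$ in $\wtilde{G}_n$ is precisely $\hat{\times}_i\wtilde{G}_{n_i}$, because the permutations fixing the labelling of coordinates by $\gamma_i$ form a Young subgroup. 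Clifford correspondence then says that inducing an irreducible of the inertia group that lies over this character gives an irreducible. The same correspondence shows that distinct $\rho$ give inequivalent characters: different $\rho$ lie over different $\wtilde{G}_n$-orbits of $G^n$-characters, or over distinct $\Phi_\rho$ for the same orbit.
\end{enumerate}

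The second step is to decide, for each $\rho$, whether $\psi_\rho$ is double or comes in an associate pair. Tensoring with $(-1)^d$ commutes with induction, since $d$ is trivial on $G^n$ and restricts compatibly to the spin product. So $\psi_\rho^a\simeq\psi_\rho$ if and only if $\Phi_\rho^a\simeq\Phi_\rho$. This is where the super tensor product theory (\cite[Proposition 3.7]{fren} and \cite[Proposition 9.8]{fren}) is used: the reduced super tensor product of factors with parities $d(\lambda^i)$ is self-associate exactly when the number $m$ of odd factors is even. Since $d(\rho)\equiv\sum_i d(\lambda^i)\equiv m \pmod 2$, this happens exactly when $\rho\in\DP_n^0(\irr(G))$. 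For $\rho\in\DP_n^1(\irr(G))$, $\psi_\rho$ and $\psi_\rho^a$ are distinct, and $\psi_\rho^a$ is not of the form $\psi_{\rho'}$ for any other $\rho'$, since it lies over the same $G^n$-orbit as $\psi_\rho$.

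The final step is completeness, done by counting. The number of irreducible spin characters equals $\dim R^-(\wtilde{G}_n)$, which is the number of split classes divided in pairs. By Theorem~\ref{split_classes_GwrSntilde} this is $|\OP_n(\C(G))|+|\DP^1_n(\C(G))|$. We have $|\C(G)|=|\irr(G)|$, so $\p(\C(G))$ and $\p(\irr(G))$ are in bijection preserving $\OP$, $\DP$ and parity. Together with the lemma $|\OP_n(X)|=|\DP_n(X)|$, this gives exactly $|\DP_n(\irr(G))|+|\DP_n^1(\irr(G))|$. Our construction already produces that many pairwise inequivalent irreducible spin representations, so the list is complete.

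The main obstacle is the second step: tracking the signs $(-1)^{d(s)d(u)}$ in the super tensor product to prove that $\Phi_\rho$ is self-associate precisely when $m$ is even. I would first handle two factors by an explicit $\mathbb{Z}_2$-graded (Clifford-module) argument: an odd-odd product contains two copies of a self-associate irreducible, while even-odd and even-even products are irreducible with the expected associate behaviour. I would then induct on $s$.
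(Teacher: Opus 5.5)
The paper does not prove this statement; it quotes it from \cite{fren}. There it is obtained together with $\och_n(\wtilde{\chi}_\rho)=2^{-d(\rho)/2}Q_\rho^*$ (Theorem~\ref{image_spin_char_Gntilde} carries the same citation), so it goes through the characteristic map. The isometry of Theorem~\ref{isometry} and the orthogonality of Theorem~\ref{prthogonality_Qrho} control the norms and inner products of the induced characters.

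Your route is genuinely different and more elementary: Clifford theory over $G^n$, the type of the reduced super tensor product, and a global count. Most of it is sound:
\begin{itemize}
\item the inertia group of $\bigotimes_i\varphi_i^{\tensor n_i}$ is the image of $\hat{\times}_i\wtilde{G}_{n_i}$;
\item $\ind(\Phi\tensor(-1)^d)\simeq(\ind\Phi)\tensor(-1)^d$, together with injectivity of the Clifford correspondence, gives $\psi_\rho^a\simeq\psi_\rho\iff\Phi_\rho^a\simeq\Phi_\rho$;
\item $d(\rho)\equiv m \pmod 2$;
\item the count $\dim R^-(\wtilde{G}_n)=|\DP_n(\irr(G))|+|\DP_n^1(\irr(G))|$ is correct.
\end{itemize}
What the cited approach gives in addition is the formula for $\och_n(\wtilde{\chi}_\rho)$, which Section~\ref{sec_main_theorem} actually uses; your route would have to compute it separately.

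There is, however, a gap in the distinctness step. It is load-bearing, because you deduce completeness from ``pairwise inequivalent and the right number''. Every $\rho'$ with $|\rho'(\gamma_i)|=|\rho(\gamma_i)|$ for all $i$ lies over the same $\wtilde{G}_n$-orbit of $G^n$-characters as $\rho$. So ``$\psi_\rho^a$ lies over the same orbit as $\psi_\rho$'' does not exclude $\psi_\rho^a\simeq\psi_{\rho'}$ for such a $\rho'\neq\rho$. Likewise, ``distinct $\Phi_\rho$ for the same orbit'' is only asserted. By the Clifford bijection, what you need is this: for $\rho\neq\rho'$ with equal block sizes, $\Phi_{\rho'}$ is isomorphic neither to $\Phi_\rho$ nor to $\Phi_\rho^a$.

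One way to supply it is to restrict to the $i$-th factor $\wtilde{G}_{n_i}$ of the spin product, which embeds as a subgroup.
\begin{itemize}
\item By the defining sign rule, an element $x$ of that factor acts on the iterated super tensor product through the $i$-th tensor factor. The only twist is $(-1)^{d(x)}$ on the part where the preceding factors have odd total degree.
\item Hence the restriction of the whole super tensor product is a sum of copies of $\widetilde{\tensor^{n_i}}\varphi_i\tensor\psi_{\lambda^i}$ and its associate. The same holds for its constituent $\Phi_\rho$, and for $\Phi_\rho^a$.
\item By Gallagher's theorem and the classification for $\wtilde{S}_{n_i}$, these representations determine $\lambda^i$. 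Hence $\Phi_\rho$ and $\Phi_\rho^a$ each determine $\rho$.
\end{itemize}
Alternatively, $\och_n(\psi_\rho)=\och_n(\psi_\rho^a)$ is a nonzero multiple of $Q_\rho$ and the $Q_\rho$ are linearly independent. That, however, imports the very computation your route avoids. With the restriction argument added, your proof is complete.
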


\section{The Frobenius characteristic on the space of spin class functions of \texorpdfstring{$\wtilde{G}_n$}{G wr Sn}}\label{Frob_char_GwrSntilde}
    As mentioned in the introduction, the authors in \cite{fren} developed a symmetric function theory associated with the irreducible spin characters of $\wtilde{G}_n$. We discuss it here. Eventually, we will use this approach to prove our main theorem. We use notations in accordance with \cite[Appendix B]{mac}, where the author develops a symmetric function approach to classify all complex irreducible characters of $G_n$.
    \subsection{The algebra \texorpdfstring{$R^-({\wtilde{G}})$}{R power G tilde}}
        Let us denote the space of all class functions of $\wtilde{G}_n$ over $\mathbb{C}$ by $R(\wtilde{G}_n)$ and $\displaystyle R(\wtilde{G}):=\oplus_{n\geq 0}R(\wtilde{G}_n)$. 
        Let $R^+(\wtilde{G}_n)$ be the subspace of all class functions $f$ such that $f(zx)=f(x),\ x\in \wtilde{G}_n$. A \textit{spin class function} on $\wtilde{G}_n$ is a class function from $\wtilde{G}_n$ to $\mathbb{C}$
        such that $f(zx)=-f(x).$ It is clear that spin class functions vanish on non-split conjugacy classes. Let $R^-(\wtilde{G}_n)$ be the  space of spin class functions of $\wtilde{G}_n$ over $\mathbb{C}$ and $R^-(\wtilde{G})=\oplus_{n \geq 0}R^-(\wtilde{G}_n).$
        For $\rho \in \OP_n(\C(G))\cup \DP^1_n(\C(G))$, define
        \[{1'_{\rho}}(x)=\begin{cases}
            1 & \text{if $\ x \in \D_{\rho}^+$},\\
            -1 & \text{if $\ x\in \D_{\rho}^-$},\\
            0 & \text{otherwise}.
            \end{cases}
        \]
        Thus,
        \[R^-(\wtilde{G}_n)=\text{$\mathbb{C}$-span of $\{1'_{\D_\rho} \mid \ \rho \in \OP_n(\C(G))\cup \DP^1_n(\C(G))\}$},\]
        and
        \[R(\wtilde{G}_n)=R^+(\wtilde{G}_n)\oplus R^-(\wtilde{G}_n);\;R^+(\wtilde{G}_n)\simeq R(G_n).\]
        A \textit{spin super class function} of $\wtilde{G}_n$ is a spin class function $f$  such that $f$ vanishes on $\D_{\rho}$, for every $\rho \in \DP_n^{1}(\C(G))$. Let $R^0(\wtilde{G}_n)$ be set of all spin super class functions on $\wtilde{G}_n$ and set $R^0(\wtilde{G}):=\oplus_{n \geq 0}R^0(\wtilde{G}_n).$
        Clearly, $R^0(\wtilde{G}_n)=\text{$\mathbb{C}$-span of $\{1'_{\rho} \mid  \rho \in \OP_n(\C(G))\}$}\subseteq R^-(\wtilde{G}_n).$

        \medskip
        
        \noindent The standard inner product $\langle.,.\rangle$ on $R(\wtilde{G}_n)$ induces an inner product on $R^{0}(\wtilde{G}_n)$. For $f,g\in R^{0}(\wtilde{G}_n)$, we get
        $$\langle f,g \rangle_{\wtilde{G}_n} =\frac{1}{|\wtilde{G}_n|}\sum_{x\in \wtilde{G}_n^0}f(x)g(x^{-1})=\frac{1}{2}\langle f,g \rangle_{\wtilde{G}_n^{0}},$$
        where $\langle f,g \rangle_{\wtilde{G}_n^{0}}$ is the inner product of $f|_{\wtilde{G}_n^0}$ and $g|_{\wtilde{G}_n^0}$ in the space of class functions of the subgroup $\wtilde{G}_n^0=G \wr \wtilde{A}_n$. The above inner product can be written as 
        \begin{equation}\label{equ.a}
            \langle f,g \rangle =\sum_{\rho \in\OP_n(\C(G))}\frac{1}{z_\rho}f(\sigma_\rho)\overline{g(\sigma_\rho)}.
        \end{equation}
        
    \subsection{The algebra \texorpdfstring{$\Delta(G)$}{Delta G}}
        Let $p_r(c)\ (r\geq 1,\ c\in \C({G}))$ be independent indeterminates over $\mathbb{C}$ with degree $r$. Here we may think of $p_r(c)$ as the $r$-th power sum symmetric functions in the variables $(x_{ic})_{i\geq 1}$. Let
        $$\Lambda({G}):=\mathbb{C}[p_r(c)\ |\ r\geq 1,\ c\in (\C({G})].$$
        If $\lambda=(\lambda_1,\lambda_2,\cdots)$ is a partition and $c\in \C(G)$, define $p_\lambda(c):=p_{\lambda_1}(c)p_{\lambda_2}(c)\cdots$. Now if $\rho\in \p(\C(G))$, then define
        $$P_\rho:=\prod_{c\in \C({G})} p_{\rho(c)}(c).$$
        Clearly, $\{P_\rho\ |\ \rho \in \p(\C({G}))\}$ is a basis for $\Lambda({G})$.
        Let $\Lambda_n({G})$ be the homogeneous component of $\Lambda({G})$ of degree $n$. We have
        \[\Lambda_n({G})=\langle P_\rho \mid \rho\in \p_n(\C(G))\rangle ,\;\text{and}\;\Lambda({G})=\oplus_{n\geq 0}\Lambda_n({G}).\]

        \medskip

        Define the sub-algebra $\Delta({G})$ of $\Lambda({G})$ by
        $$\Delta({G}):=\mathbb{C}[p_r(c)\ |\ r \ \mbox{is a odd positive integer and}\ c \in \C(G)].$$
        Clearly, $$\Delta({G})=\langle P_\rho\ |\ \rho \in \OP(\C(G))\rangle.$$
        Let $\Delta_n({G})$ be the homogeneous component of $\Delta({G})$ of degree $n$. Then
        $$\Delta_n({G})=\langle P_\rho\ |\ \rho\in \OP_n(\C(G))\rangle,\;\;\text{and}\quad\Delta({G})=\oplus_{n\geq 0}\Delta_n({G}).$$
        For $\rho$ and $\rho'\in \OP_n(\C(G))$ define an inner product on $\Delta_n({G})$ by
        $$\langle P_\rho, P_{\rho'} \rangle:=z_\rho2^{-l(\rho)}\delta_{\rho \rho'},$$
        where $z_\rho$ is the size of the centralizer of an element of $G\wr S_n$ whose conjugacy class is indexed by $\rho$.

    \subsection{The odd characteristic och} The odd characteristic map is the linear transformation $\och_n: R^-(\wtilde{G}_n)\rightarrow \Delta_n({G})$ defined by
    \[\och_n(1'_{\rho})=\begin{cases}
    \frac{2^{\frac{l(\rho)}{2}}}{z_\rho}P_\rho & \text{if}\ \rho \in \OP_n(\C(G)),\\
    0 & \text{if}\ \rho \in \DP^1_n(\C(G)).\end{cases}\]
    Thus, for $f \in R^-({\wtilde{G}_n})$, we have
    $$\och_n(f)=\sum_{\rho \in \OP_n(\C(G))}\frac{2^{\frac{l(\rho)}{2}}}{z_\rho}f(\sigma_{\rho})P_\rho.$$  We remark that it is convenient to define $\och_n$ on $R^{-}(\wtilde{G}_n)$ in place of $R^{0}(\wtilde{G}_n)$ (which is done in \cite[6.1]{fren}). We linearly extend the map $\och_n$ from $R^-({\wtilde{G}})$ to $\Delta({G})$, which we call \textbf{och}. We include a proof of the next result for clarity.
    
    \begin{theorem}\cite[Proposition 6.4]{fren}\label{isometry}
        The map $\och_n|{_{R^0(\wtilde{G}_n)}}:R^0(\wtilde{G}_n)\rightarrow \Delta_n({G})$ is an isometry.
    \end{theorem}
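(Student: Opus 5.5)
The proof is a direct computation on the basis $\{1'_\rho \mid \rho\in\OP_n(\C(G))\}$ of $R^0(\wtilde{G}_n)$. Both sides are sesquilinear (or bilinear, depending on the convention), so it suffices to check
\[
\langle \och_n(1'_\rho),\och_n(1'_{\rho'})\rangle_{\Delta_n(G)}=\langle 1'_\rho,1'_{\rho'}\rangle_{\wtilde{G}_n}
\qquad\text{for all } \rho,\rho'\in\OP_n(\C(G)).
\]

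\textbf{Left-hand side.} By the definition of $\och_n$ and of the inner product on $\Delta_n(G)$,
\[
\langle \och_n(1'_\rho),\och_n(1'_{\rho'})\rangle=\frac{2^{l(\rho)/2}2^{l(\rho')/2}}{z_\rho z_{\rho'}}\langle P_\rho,P_{\rho'}\rangle=\frac{2^{l(\rho)}}{z_\rho^2}\,z_\rho 2^{-l(\rho)}\delta_{\rho\rho'}=\frac{\delta_{\rho\rho'}}{z_\rho}.
\]

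\textbf{Right-hand side.} I use formula \eqref{equ.a}. It should be justified rather than quoted, since it is the heart of the matter. A spin function vanishes on non-split classes, and $f(x)g(x^{-1})$ is invariant under $x\mapsto zx$. Hence
\[
\langle f,g\rangle=\frac{1}{|\wtilde{G}_n|}\sum_x f(x)\overline{g(x)}
\]
reduces to a sum over split classes. For $f,g\in R^0$ only the classes $\D^\pm_\rho$ with $\rho\in\OP_n(\C(G))$ contribute. By Proposition~\ref{centralizer_split_classes_GwrSntilde}, each of $\D^+_\rho$ and $\D^-_\rho$ has size $|\wtilde{G}_n|/(2z_\rho)$. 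The two classes together therefore give
\[
2\cdot\frac{1}{2z_\rho}\,f(\sigma_\rho)\overline{g(\sigma_\rho)},
\]
which is \eqref{equ.a}. Taking $f=1'_\rho$ and $g=1'_{\rho'}$ then yields $\delta_{\rho\rho'}/z_\rho$, matching the left-hand side.

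\textbf{Main obstacle.} The only delicate point is the bookkeeping of the factor $2$ from the splitting, together with the convention on complex conjugation versus $g(x^{-1})$. For the conjugation: the element $\sigma_\rho^{-1}$ lies in $\D^{\pm}_\rho$, and $\overline{g(x)}=g(x^{-1})$ holds for characters. For the basis functions $1'_\rho$, which are real-valued, it is cleanest to use the conjugate-linear form so that the identity needs no sign check. With that convention fixed, the computation above finishes the proof. Surjectivity (onto $\Delta_n(G)$) is immediate because the $P_\rho$ with $\rho\in\OP_n(\C(G))$ form a basis.
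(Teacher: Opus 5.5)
Your argument is correct and is essentially the paper's computation. The paper expands arbitrary $f,g\in R^0(\wtilde{G}_n)$ in the basis $P_\rho$ and simply quotes \eqref{equ.a}, whereas you check the identity on the basis $1'_\rho$ and also derive \eqref{equ.a} from the class sizes $|\wtilde{G}_n|/(2z_\rho)$ supplied by Proposition~\ref{centralizer_split_classes_GwrSntilde}, which is a welcome addition. One correction to your aside: $\sigma_\rho^{-1}$ lies in $\D_{\bar\rho}$ with $\bar\rho(c)=\rho(c^{-1})$, not necessarily in $\D_\rho$ when $G$ has non-real classes, so the conjugate-linear convention you adopt (matching the conjugates in \eqref{equ.a} and in the paper's proof) is genuinely required rather than merely cleanest.
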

    
    \begin{proof}
        Let $\rho,\rho' \in \OP_n(\C(G))$. Recall that $\langle P_\rho, P_{\rho'} \rangle=z_\rho2^{-l(\rho)}\delta_{\rho \rho'}.$
        Let $f, g \in R^0({\wtilde{G}_n})$.  Then, 
        \begin{align*}
        \langle \och_n(f),\och_n(g) \rangle&=\langle\sum_{\rho \in \OP_n(\C(G))}\frac{2^{\frac{l(\rho)}{2}}}{z_\rho}f(\sigma_{\rho})P_\rho \ ,\sum_{\rho' \in \OP_n(\C(G))}\frac{2^{\frac{l(\rho')}{2}}}{z_{\rho'}}g(\sigma_{\rho'})P_{\rho'} \rangle\\
        &=\sum_{\rho, \rho' \in \OP_n(\C(G))}\frac{2^{\frac{l(\rho)+l(\rho')}{2}}}{z_\rho z_{\rho'}}f(\sigma_{\rho})\overline{g(\sigma_{\rho'})} \langle P_\rho, P_{\rho'}\rangle\\
        &=\sum_{\rho, \rho' \in \OP_n(\C(G))}\frac{2^{\frac{l(\rho)+l(\rho')}{2}}}{z_\rho z_{\rho'}}f(\sigma_{\rho})\overline{g(\sigma_{\rho'})} z_\rho 2^{-l(\rho))}\delta_{\rho,\rho'}\\
        &=\sum_{\rho \in \OP_n(\C(G))}\frac{2^{l(\rho)}}{z_\rho z_{\rho}}f(\sigma_{\rho})\overline{g(\sigma_{\rho'})} z_\rho 2^{-l(\rho))}
        =\sum_{\rho \in \OP_n(\C(G))}\frac{1}{z_\rho}f(\sigma_{\rho})\overline{g(\sigma_{\rho'})}=\langle f, g \rangle.
        \end{align*}
        
    \end{proof}

    \begin{corollary}\label{cor.isometry}
        Let $f\in R^-({\wtilde{G}_n})$ and $g\in R^0({\wtilde{G}_n})$, then $\langle \och_n(f),\och_n(g) \rangle=\langle f, g \rangle.$
    \end{corollary}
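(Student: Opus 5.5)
The plan is to reduce to the isometry of Theorem~\ref{isometry} and a direct computation on the basis $\{P_\rho\}$. Both $f\mapsto \och_n(f)$ and the inner products in question are (sesqui)linear, so it suffices to check the identity with $f$ running over the spanning set $\{1'_{\rho}\mid \rho\in \OP_n(\C(G))\cup\DP^1_n(\C(G))\}$ of $R^-(\wtilde{G}_n)$. The key observation is that $g\in R^0(\wtilde{G}_n)$ vanishes on all of $\D_{\rho}$ for $\rho\in \DP^1_n(\C(G))$, while $\och_n$ kills $1'_\rho$ for such $\rho$. So the $\DP^1$-part of $f$ contributes $0$ to both sides.

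In more detail, I would write $f=f_0+f_1$, where
\[
f_0=\sum_{\rho\in \OP_n(\C(G))} f(\sigma_\rho)1'_\rho\in R^0(\wtilde{G}_n), \qquad f_1=\sum_{\rho\in \DP^1_n(\C(G))\setminus \OP_n(\C(G))} f(\sigma_\rho)1'_\rho .
\]
One must watch the overlap $\OP_n\cap\DP^1_n$ (for example $\rho$ with a single odd part equal to $1$ and otherwise distinct odd parts). On this overlap the definition of $\och_n$ uses the $\OP$ clause, so I would assign such $\rho$ to $f_0$; this is why $f_1$ sums only over $\DP^1_n\setminus\OP_n$. Then $\och_n(f_1)=0$ by definition, so $\och_n(f)=\och_n(f_0)$.

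For the right-hand side, $f_1$ is supported on classes $\D_\rho$ with $\rho\in\DP^1_n(\C(G))\setminus\OP_n(\C(G))$, where $g$ vanishes by the definition of spin super class functions. Hence $\langle f_1,g\rangle=\frac{1}{|\wtilde G_n|}\sum_x f_1(x)\overline{g(x)}=0$, and $\langle f,g\rangle=\langle f_0,g\rangle$. Theorem~\ref{isometry} applied to $f_0,g\in R^0(\wtilde{G}_n)$ then gives
\[
\langle \och_n(f),\och_n(g)\rangle=\langle \och_n(f_0),\och_n(g)\rangle=\langle f_0,g\rangle=\langle f,g\rangle .
\]

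The only delicate point is this bookkeeping at the overlap $\OP_n(\C(G))\cap\DP^1_n(\C(G))$. I would resolve it as above: classes in the overlap are treated as $\OP$ classes, consistently with both the definition of $\och_n$ and that of $R^0(\wtilde{G}_n)$ as the span of $1'_\rho$ for $\rho\in\OP_n(\C(G))$. Alternatively, the whole statement can be checked directly from \eqref{equ.a}, since the inner product there involves only values at $\sigma_\rho$ for $\rho\in\OP_n(\C(G))$, exactly the values that $\och_n$ records.
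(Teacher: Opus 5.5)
Your proof is correct and follows the paper's argument: you split $f$ into its $\OP_n(\C(G))$-part, which lies in $R^0(\wtilde{G}_n)$, and its $\DP^1_n(\C(G))$-part, which $\och_n$ kills and which is orthogonal to $g$, and then you apply Theorem~\ref{isometry} (writing the pieces in terms of $1'_\rho$ rather than $P_\rho$, as you do, is actually cleaner than the paper's write-up). The one flaw is your concern about an overlap, which does not arise: $\OP_n(\C(G))\cap\DP^1_n(\C(G))=\emptyset$, because when every part of $\rho$ is odd, $||\rho||-l(\rho)=\sum(\text{part}-1)$ is even, so $d(\rho)=0$ (for instance $((3,1))$ has parity $4-2\equiv 0$). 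Hence your example is wrong, and restricting $f_1$ to $\DP^1_n\setminus\OP_n$ is harmless but unnecessary.
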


    \begin{proof}
        Suppose, $f\in R^-({\wtilde{G}_n})$ and $g\in R^0({\wtilde{G}_n})$. Then we can write $f=f_1+f_2$, where $\displaystyle f_1= \sum_{\rho \in \OP_n(\C(G))}\frac{2^{\frac{l(\rho)}{2}}}{z_\rho}f(\sigma_{\rho})P_\rho$ and $\displaystyle f_2=\sum_{\rho' \in \DP^1_n(\C(G))}\frac{2^{\frac{l(\rho')}{2}}}{z_{\rho'}}f(\sigma_{\rho'})P_{\rho'}$. Since for $\rho \in \OP_n(\C(G))\cup \DP_n^1(\C(G))$, $P_\rho$'s forms an orthogonal basis, so $\langle f,g \rangle=\langle f_1+f_2 ,g \rangle=\langle f_1,g\rangle$. Now, 
        \begin{align*}
            \langle \och_n(f),\och_n(g) \rangle&=\langle \och_n(f_1+f_2),\och_n(g) \rangle=\langle \och_n(f_1)+\och_n(f_2),\och_n(g) \rangle\\
            &=\langle \och_n(f_1),\och_n(g)\rangle+\langle\och_n(f_2),\och_n(g) \rangle\\
            &=\langle \och_n(f_1),\och_n(g)\rangle=\langle f_1, g \rangle=\langle f, g \rangle
        \end{align*}
        for all $f\in R^-({\wtilde{G}_n})$ and $g\in R^0({\wtilde{G}_n})$.   
    \end{proof}
    
    \subsection{Image of an irreducible  spin character of \texorpdfstring{$\wtilde{G}_n$}{GwrSntilde} under the och map}
        
        Recall that $\irr(G)$ is the set of all irreducible characters of $G$ with a fixed labeling $\irr(G)=\{\gamma_1,\cdots, \gamma_s\}$. For each irreducible character $\gamma \in \irr(G)$ and for $r\geq 1$, define
        $$p_r(\gamma)=\sum_{c\in \C(G)} \frac{\gamma(c)p_r(c)}{z_c},$$
        where $z_c$ is the size of the centralizer of an element $x\in c$.\par
        \noindent By the dual orthogonality relation 
        $$p_r(c)=\sum_{\gamma \in \irr(G)}\gamma({c^{-1}})p_r(\gamma).$$
        Thus, $\{p_r(\gamma)\mid \gamma\in \irr(G)\}$ is algebraically independent and generates $\Lambda({G})$ as a $\mathbb{C}$-algebra. Moreover, $\{p_r(\gamma)\mid r\ \text{odd}, \gamma\in \irr(G)\}$ generates $\Delta(G)$ as $\mathbb{C}$-algebra. If $\lambda=(\lambda_1,\lambda_2,\cdots)$ is a partition and $\gamma \in \irr(G)$, define $p_\lambda(\gamma):=p_{\lambda_1}(\gamma)p_{\lambda_2}(\gamma)\cdots$.

        \medskip
        
        \noindent For $\gamma \in \irr(G)$, define
        $$q_n(\gamma):=\sum_{\lambda \in \OP_n}2^{l(\lambda)}p_\lambda(\gamma)/z_\lambda.$$
        
    
        \noindent Let $\displaystyle \lambda \in \DP_n$. Then $\displaystyle Q_\lambda=\sum_{\mu \in \OP_n}{a_{\lambda \mu}} p_\mu,$ where $a_{\lambda \mu}=\frac{\langle Q_\lambda, p_\mu\rangle}{\langle p_\mu,p_\mu \rangle}=\frac{2^{l(\mu)}\langle Q_\lambda, p_\mu\rangle}{z_{\mu}}$. For $\gamma \in \irr(G)$, define
        $$Q_\lambda(\gamma):=\sum_{\mu \in \OP_n}{a_{\lambda \mu}} p_\mu(\gamma).$$
        Thus, if $\lambda=(n)$, we obtain $Q_{(n)}(\gamma)=q_n(\gamma)$. Let $\rho \in \DP_n(\irr(G))$. Define
        $$Q_\rho:=\prod_{\gamma\in \irr(G)}Q_{\rho(\gamma)}(\gamma).$$  

        \begin{theorem}\cite[Proposition 9.5]{fren}\label{prthogonality_Qrho}
            Let $\rho, \rho'\in \DP_n(\irr(G))$. Then $\langle Q_\rho, Q_{\rho'}\rangle=2^{l(\rho)}\delta_{\rho \rho'}.$ In particular, $\{Q_{\rho}\mid \rho \in \DP_n(\irr(G))\}$ forms an orthogonal basis of $\Delta_n(G)$.
        \end{theorem}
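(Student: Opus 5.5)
The plan is to show that the inner product on $\Delta_n(G)$ factors over $\irr(G)$, and then reduce to the orthogonality of Schur $Q$-functions in $\Delta$ quoted in Section~\ref{Frob_char_Sntilde}.

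\textbf{Step 1: the $p_r(\gamma)$ are orthogonal.} First compute the inner product of the new generators. From $\langle P_\rho,P_{\rho'}\rangle=z_\rho 2^{-l(\rho)}\delta_{\rho\rho'}$, for odd $r,s$ we have $\langle p_r(c),p_s(c')\rangle = r\zeta_c\, 2^{-1}\delta_{rs}\delta_{cc'}$, where $\zeta_c=z_c$. Substitute $p_r(\gamma)=\sum_c \gamma(c)p_r(c)/z_c$ and use the inner product antilinearly in the second slot. The column orthogonality relation $\sum_c \gamma(c)\overline{\gamma'(c)}/z_c=\delta_{\gamma\gamma'}$ then gives
\[\langle p_r(\gamma),p_s(\gamma')\rangle=\tfrac{r}{2}\,\delta_{rs}\delta_{\gamma\gamma'}.\]
This is exactly the inner product $\langle p_r,p_s\rangle=2^{-1}r\delta_{rs}$ on $\Delta$.

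\textbf{Step 2: the basis $\{P'_\rho\}$ and its inner product.} For $\rho\in\OP_n(\irr(G))$, define $P'_\rho:=\prod_\gamma p_{\rho(\gamma)}(\gamma)$. Since the $p_r(\gamma)$ are algebraically independent and generate $\Delta(G)$, these products form a basis of $\Delta_n(G)$. I would next establish
\[\langle P'_\rho,P'_{\rho'}\rangle=\delta_{\rho\rho'}\prod_\gamma z_{\rho(\gamma)}2^{-l(\rho(\gamma))}.\]
This is the step I expect to be the main obstacle. The bilinear form is defined on the monomial basis $P_\rho$ in the $p_r(c)$, so one must check that it is the "product" form: monomials in independent orthogonal generators pair according to the multinomial/centralizer count.

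The way to see this is that $\langle P_\rho,P_{\rho'}\rangle=\delta\prod_c\prod_r (r\zeta_c/2)^{m_r}m_r!$. This is the standard Hall-type form determined by $\langle p_r(c),p_s(c')\rangle$ together with the rule that the adjoint of multiplication by $p_r(c)$ is $\tfrac{r\zeta_c}{2}\partial/\partial p_r(c)$. Under the linear change of variables in Step 1, that rule transfers to the $p_r(\gamma)$: the adjoint of multiplication by $p_r(\gamma)$ is $\tfrac r2\,\partial/\partial p_r(\gamma)$. So the form in the $p_r(\gamma)$-coordinates has the same product shape, with $\zeta=1$.

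\textbf{Step 3: conclude.} It follows that the map $\Delta^{\otimes s}\to\Delta(G)$ sending $f_1\otimes\cdots\otimes f_s\mapsto\prod_i f_i(\gamma_i)$ is an isometry, where $f(\gamma)$ means $f$ with $p_r$ replaced by $p_r(\gamma)$. Since $Q_\lambda(\gamma)$ is by definition the image of $Q_\lambda$ under $p_\mu\mapsto p_\mu(\gamma)$, we get
\[\langle Q_\rho,Q_{\rho'}\rangle=\prod_\gamma\langle Q_{\rho(\gamma)},Q_{\rho'(\gamma)}\rangle=\prod_\gamma 2^{l(\rho(\gamma))}\delta_{\rho(\gamma)\rho'(\gamma)}=2^{l(\rho)}\delta_{\rho\rho'},\]
using $\langle Q_\lambda,Q_\mu\rangle=2^{l(\lambda)}\delta_{\lambda\mu}$ in $\Delta$.

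Finally, the $Q_\rho$ are nonzero and pairwise orthogonal, hence linearly independent. The lemma $|\OP_n(X)|=|\DP_n(X)|$, applied with $X=\irr(G)$ and using $|\irr(G)|=|\C(G)|$, shows their number equals $\dim\Delta_n(G)=|\OP_n(\C(G))|$. Therefore $\{Q_\rho\}$ is an orthogonal basis of $\Delta_n(G)$.
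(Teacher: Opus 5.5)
The paper does not prove this statement itself; it is quoted from \cite[Proposition 9.5]{fren}, so there is no in-paper argument to compare with. Your proof is correct and self-contained.

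\textbf{Step 1.} The computations are right. You are also right to use the form antilinearly in the second slot, which is the convention the paper uses in the proof of Theorem~\ref{isometry}. With a bilinear form the same computation gives $\tfrac r2\delta_{rs}\delta_{\gamma,\overline{\gamma'}}$, and the statement would fail whenever $\rho$ is supported on a non-real character.

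There is one naming slip. The identity $\sum_c\gamma(c)\overline{\gamma'(c)}/z_c=\delta_{\gamma\gamma'}$ is the first (row) orthogonality relation, not column orthogonality. Column orthogonality is what inverts the substitution, $p_r(c)=\sum_\gamma\overline{\gamma(c)}\,p_r(\gamma)$ (the paper's ``dual orthogonality relation''). That inverse is exactly what Step 2 needs. It gives the chain rule $\partial/\partial p_r(\gamma)=\sum_c\overline{\gamma(c)}\,\partial/\partial p_r(c)$, which identifies the adjoint $\tfrac r2\sum_c\overline{\gamma(c)}\,\partial/\partial p_r(c)$ of multiplication by $p_r(\gamma)$ with $\tfrac r2\,\partial/\partial p_r(\gamma)$.

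\textbf{Step 2.} To make this airtight, two things should be stated explicitly:
\begin{enumerate}
\item You extend the form to all of $\Delta(G)$ by making different degrees orthogonal; the paper only defines it on each $\Delta_n(G)$.
\item The relation $\langle p_r(\gamma)F,H\rangle=\tfrac r2\langle F,\partial H/\partial p_r(\gamma)\rangle$, together with $\langle 1,1\rangle=1$, forces $\langle P'_\rho,P'_{\rho'}\rangle=\delta_{\rho\rho'}\prod_\gamma\prod_r (r/2)^{m_r}m_r!$ by induction on degree.
\end{enumerate}
That induction is the real content of ``the form has the same product shape''.

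\textbf{An alternative route.} The same factorization can be obtained from the reproducing kernel $\sum_\rho 2^{l(\rho)}z_\rho^{-1}P_\rho(x)\overline{P_\rho(y)}$, in two independent copies $x,y$ of the variables.
\begin{itemize}
\item The kernel equals $\exp\bigl(\sum_{r\ \mathrm{odd}}\tfrac 2r\sum_c\zeta_c^{-1}p_r(c;x)\overline{p_r(c;y)}\bigr)$.
\item Character orthogonality rewrites the inner sum as $\sum_\gamma p_r(\gamma;x)\overline{p_r(\gamma;y)}$.
\item So the kernel factors over $\irr(G)$ into copies of the $Q$-function Cauchy kernel $\sum_{\lambda\in\DP}2^{-l(\lambda)}Q_\lambda(x)Q_\lambda(y)$.
\item Orthogonality of the $Q_\rho$ then follows from the duality criterion.
\end{itemize}
Your adjoint-operator argument reaches the same conclusion more directly, without the kernel and duality machinery.

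\textbf{Final step.} The counting lemma is dispensable. The substitution map of Step 3 is a degree-preserving linear bijection $\Delta^{\otimes s}\to\Delta(G)$, because the $p_r(\gamma)$ are free generators. Hence it carries the basis $\{Q_{\lambda^1}\otimes\cdots\otimes Q_{\lambda^s}\}$ onto $\{Q_\rho\}$ directly.
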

       
       \noindent Set ${Q_\rho}^*:=2^{-l(\rho)/2} Q_\rho$.  \noindent The following theorem determines the image of any irreducible spin character of $\wtilde{G}_n$.
        
        \begin{theorem}\cite[Theorem 9.7]{fren}\label{image_spin_char_Gntilde}
            Let $\rho \in \DP_n(\irr(G))$. We have $\och_n(\wtilde{\chi}_\rho)=2^{-\frac{d(\rho)}{2}}Q_\rho^*.$
        \end{theorem}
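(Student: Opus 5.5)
The plan is to reduce the statement to the $\wtilde{S}_n$ case, Theorem~\ref{image_spin_char_S_ntilde}, in two stages. First treat the ``one-colour'' characters $\wtilde{\chi}_\lambda(\gamma)$. Then pass to general $\rho$ using the fact that $\och$ turns induction from a spin product into multiplication. Since $\och_n$ only sees values on the classes $\sigma_\rho$ with $\rho\in\OP_n(\C(G))$, every computation happens on elements whose underlying permutation has only odd cycles. Such elements are even, so the twisting signs $(-1)^{d(s)d(u)}$ in the super tensor product never act there.

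\textbf{Step 1 (one irreducible $\gamma$).} Let $\rho_0\in\DP_n(\irr(G))$ be supported on a single $\gamma$, with $\rho_0(\gamma)=\lambda$. The character of $\widetilde{\tensor^n}\varphi\tensor\psi_\lambda$ at $\sigma_\rho$, for $\rho\in\OP_n(\C(G))$, is $\prod_{\text{cycles}}\gamma(c_{\text{cycle}})\cdot\wtilde{\chi}_\lambda(\sigma_\mu)$. Here $\mu$ is the cycle type and the product is over the cycle products; this is the standard trace computation, see \cite[Proposition 4.4]{fren}. I would substitute this into $\och_n(f)=\sum_\rho 2^{l(\rho)/2}z_\rho^{-1}f(\sigma_\rho)P_\rho$ and group the $\rho$ according to $\mu$. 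Using $z_\rho=\prod_c z_{\rho(c)}\zeta_c^{l(\rho(c))}$ and a multinomial count of how the parts of $\mu$ are distributed among the classes $c$, the inner sum becomes
\[
\sum_{\rho\mapsto\mu}\frac{\prod\gamma(c)}{z_\rho}P_\rho=\frac{1}{z_\mu}\prod_i\Bigl(\sum_c\frac{\gamma(c)p_{\mu_i}(c)}{\zeta_c}\Bigr)=\frac{p_\mu(\gamma)}{z_\mu}.
\]
Hence $\och_n(\wtilde{\chi}_\lambda(\gamma))$ is the image of $\och_n(\wtilde{\chi}_\lambda)$ under the substitution $p_r\mapsto p_r(\gamma)$. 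By Theorem~\ref{image_spin_char_S_ntilde} and the definition of $Q_\lambda(\gamma)$ through the coefficients $a_{\lambda\mu}$, this equals $2^{-d(\lambda)/2}2^{-l(\lambda)/2}Q_\lambda(\gamma)$.

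\textbf{Step 2 (multiplicativity).} For spin class functions $f$ on $\wtilde{G}_m$ and $g$ on $\wtilde{G}_k$, I would show that
\[
\och_{m+k}\bigl(\ind_{\wtilde{G}_m\hat{\times}\wtilde{G}_k}^{\wtilde{G}_{m+k}}(f\hat{\tensor}g)\bigr)=\och_m(f)\,\och_k(g).
\]
The method is to evaluate the induced function at $\sigma_\rho$ with the induced-character formula, summing over the ways of splitting $\rho=\rho'\sqcup\rho''$. On these even elements the super tensor character is just $f\cdot g$. The centralizer orders give $z_\rho^{-1}\cdot\#\{\text{splittings}\}=\sum z_{\rho'}^{-1}z_{\rho''}^{-1}$, and this is the same bookkeeping as in Macdonald's proof of \cite[Appendix B]{mac}. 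One point needs separate care: the index of $\wtilde{G}_m\hat{\times}\wtilde{G}_k$ in $\wtilde{G}_{m+k}$ is $\binom{m+k}{m}$, because both the numerator and the quotient by $\{(1,1),(z,z)\}$ carry a factor $2$. This fact has to be used consistently.

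\textbf{Step 3 (assembling).} Apply Step 2 iteratively to the characters of Step 1. Then $\och_n$ of the character of $\overset{i}{\widehat{\tensor}}(\cdots)$ induced to $\wtilde{G}_n$ is
\[
\prod_i 2^{-(d(\lambda^i)+l(\lambda^i))/2}Q_{\lambda^i}(\gamma_i)=2^{-m/2}Q_\rho^*,\qquad m=\#\{i: d(\lambda^i)=1\}.
\]
This super tensor product is $2^{[m/2]}$ copies of $\Phi_\rho$ (\cite[Proposition 9.8]{fren}), so $\och_n(\wtilde{\chi}_\rho)=2^{[m/2]-m/2}Q_\rho^*$. Since $d(\rho)\equiv m \pmod 2$, we have $[m/2]-m/2=-d(\rho)/2$, which gives the claimed formula.

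I expect the main obstacle to be Step 2 together with the multiplicity count in Step 3. The factors of $2$ arise from three sources: the quotient in the spin product, the $2^{l/2}$ normalisation of $\och$, and the splitting multiplicity $[m/2]$. Each of these must be tracked exactly, and it must be checked that the restriction to $\OP$-classes really does remove every sign twist. As an independent cross-check of the powers of $2$, I would compute norms: Theorem~\ref{prthogonality_Qrho} gives $\|Q_\rho^*\|=1$, and by Corollary~\ref{cor.isometry} the norm of $\och_n(\wtilde{\chi}_\rho)$ must equal $\langle\wtilde{\chi}_\rho,\wtilde{\chi}_\rho\rangle$ restricted to the super part. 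That value is $1$ or $\tfrac12$ according to whether $\rho$ is even or odd, which again forces the factor $2^{-d(\rho)/2}$.
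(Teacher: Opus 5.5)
The paper gives no proof of this statement; it is quoted from \cite[Theorem 9.7]{fren}. I have therefore read your argument on its own terms. Steps 1 and 2 are sound. The multinomial count does give $p_\mu(\gamma)/z_\mu$. Conjugates of $\sigma_\rho$ ($\rho\in\OP(\C(G))$) inside $\wtilde{G}_m\hat{\times}\wtilde{G}_k$ have both components of odd cycle type, hence even, so the induced-character computation does give $\och(\ind(f\widehat{\tensor}g))=\och(f)\och(g)$.

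The gap is in Step 3, in exactly the powers of $2$ you flagged. First, if $X$ is $2^{[m/2]}$ copies of $\Phi_\rho$, then $\och_n(\ind\Phi_\rho)=2^{-[m/2]}\och_n(\ind X)$, but you multiplied by $2^{[m/2]}$. Dividing correctly with your inputs gives $2^{-[m/2]-m/2}Q^*_\rho$. For $m=2$ this is $\tfrac14 Q^*_\rho$, yet then $d(\rho)=0$ and $\|\och_n(\wtilde{\chi}_\rho)\|^2=\langle\wtilde{\chi}_\rho,\wtilde{\chi}_\rho\rangle=1$.

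Second, and this is the real source of the trouble: the sign $(-1)^{d(s)d(u)}$ in $\widehat{\tensor}$ only makes sense on $\mathbb{Z}/2$-graded modules, and the multiplicity $2^{[m/2]}$ of \cite[Proposition 9.8]{fren} is a statement about graded modules. When $d(\lambda^i)=1$, the irreducible module from Step 1 is not isomorphic to its associate, so it admits no compatible grading. The graded module is its sum with its associate, whose $\och$ is $2^{(1-l(\lambda^i))/2}Q_{\lambda^i}(\gamma_i)$, twice your Step 1 value. You can see directly that something is off: $2^{-m/2}Q^*_\rho$ has squared norm $2^{-m}<\tfrac12$ for $m\ge 2$. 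By Corollary~\ref{cor.isometry}, it is therefore not $\och_n$ of any genuine spin character. Your exponent $[m/2]-m/2$ comes out right only because the inverted multiplicity cancels the missing graded/ungraded factors.

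Either of two repairs works.

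(a) Do the bookkeeping with graded modules throughout. The product of the $\och$'s is $2^{m/2}Q^*_\rho$. Dividing by $2^{[m/2]}$ gives $2^{d(\rho)/2}Q^*_\rho$ for the graded $\ind\Phi_\rho$. When $d(\rho)=1$, this module is $\psi_\rho\oplus\psi_\rho^a$ as an ungraded module. The character $\wtilde{\chi}_\rho^a$ of $\psi_\rho^a$ has the same $\och_n$ as $\wtilde{\chi}_\rho$, which supplies the remaining factor $\tfrac12$ and yields $2^{-d(\rho)/2}Q^*_\rho$.

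(b) Promote your norm cross-check to the actual argument. Steps 1--2, together with the fact that the graded tensor product is isotypic, show that $\och_n(\wtilde{\chi}_\rho)=cQ^*_\rho$ with $c>0$. Theorem~\ref{prthogonality_Qrho} and Corollary~\ref{cor.isometry} then give $c^2=1$ when $d(\rho)=0$. When $d(\rho)=1$ they give $c^2=\tfrac12$, using $\och_n(\wtilde{\chi}_\rho)=\och_n\bigl(\tfrac12(\wtilde{\chi}_\rho+\wtilde{\chi}_\rho^a)\bigr)$, where the latter function lies in $R^0(\wtilde{G}_n)$. Route (b) does not need the exact multiplicity at all.
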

        

\section{A Morris recursion formula for \texorpdfstring{$\wtilde{G}_n$}{G wr Sn}}\label{sec_main_theorem}
    In this section, we prove our main theorem. We begin with a lemma.
     \begin{lemma}\label{spin_char_value_inn_prod_Gntilde}
        Let $\rho \in \DP_n(\irr(G))$ and $\rho' \in \OP_n(\C(G))$. Then $\wtilde{\chi}_{\rho}(\sigma_{\rho'})=2^{\frac{l(\rho')-l(\rho)-d(\rho)}{2}}\langle {Q_\rho}, P_{\rho'}\rangle$.  
    \end{lemma}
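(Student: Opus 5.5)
The plan is to mimic the derivation of Equation~\eqref{spin_char_value_inn_prod} for $\wtilde{S}_n$. We expand $\och_n(\wtilde{\chi}_\rho)$ in the orthogonal basis $\{P_{\rho''}\mid \rho''\in\OP_n(\C(G))\}$ of $\Delta_n(G)$ and read off the coefficient of $P_{\rho'}$ in two ways.

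First, by the definition of the odd characteristic,
$$\och_n(\wtilde{\chi}_\rho)=\sum_{\rho''\in \OP_n(\C(G))}\frac{2^{l(\rho'')/2}}{z_{\rho''}}\wtilde{\chi}_\rho(\sigma_{\rho''})P_{\rho''}.$$
Pairing with $P_{\rho'}$ and using $\langle P_{\rho''},P_{\rho'}\rangle=z_{\rho'}2^{-l(\rho')}\delta_{\rho''\rho'}$ gives
$$\langle \och_n(\wtilde{\chi}_\rho),P_{\rho'}\rangle=2^{-l(\rho')/2}\,\wtilde{\chi}_\rho(\sigma_{\rho'}).$$
We take the inner product to be conjugate-linear in the second slot. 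Since $P_{\rho'}$ has real (rational) coefficients in the $P$-basis, no conjugation issue arises. Note that we only need the definition of $\och_n$ and the inner product on $\Delta_n(G)$, not the isometry.

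Second, by Theorem~\ref{image_spin_char_Gntilde},
$$\och_n(\wtilde{\chi}_\rho)=2^{-d(\rho)/2}Q_\rho^*=2^{-(d(\rho)+l(\rho))/2}Q_\rho,$$
so the same pairing equals $2^{-(d(\rho)+l(\rho))/2}\langle Q_\rho,P_{\rho'}\rangle$. Equating the two expressions and multiplying by $2^{l(\rho')/2}$ yields
$$\wtilde{\chi}_\rho(\sigma_{\rho'})=2^{\frac{l(\rho')-l(\rho)-d(\rho)}{2}}\langle Q_\rho,P_{\rho'}\rangle,$$
which is the claim.

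The only potential subtlety, which I expect to be the main point to check, is that $\wtilde{\chi}_\rho$ is a genuine spin class function, so that $\och_n(\wtilde{\chi}_\rho)$ is given by the displayed formula. Its values on the $\DP^1_n(\C(G))$ classes are discarded by $\och_n$, but this is harmless because we only evaluate on $\sigma_{\rho'}$ with $\rho'\in\OP_n(\C(G))$. We also need the convention that $\sigma_{\rho'}$ represents $\D^+_{\rho'}$, consistent with $1'_{\rho'}(\sigma_{\rho'})=1$. With these in place, the argument is the direct coefficient comparison above.
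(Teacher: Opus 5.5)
Your proof is correct, and it takes a slightly more direct route than the paper's. The paper works on the group side first. It computes $\langle \wtilde{\chi}_\rho, 1'_{\rho'}\rangle_{\wtilde{G}_n}=\wtilde{\chi}_\rho(\D^+_{\rho'})/z_{\rho'}$ using the class sizes $|\D^{\pm}_{\rho'}|=|\wtilde{G}_n|/(2z_{\rho'})$ from Proposition~\ref{centralizer_split_classes_GwrSntilde}. It then transfers this to $\langle \och_n(\wtilde{\chi}_\rho),\och_n(1'_{\rho'})\rangle$ via Corollary~\ref{cor.isometry}, and only then substitutes Theorem~\ref{image_spin_char_Gntilde}.

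You instead read off the coefficient of $P_{\rho'}$ in $\och_n(\wtilde{\chi}_\rho)$ directly, from the defining formula for $\och_n$ and the orthogonality of the $P$-basis. So you need neither the centralizer orders nor the isometry statement. Since $\och_n(1'_{\rho'})=2^{l(\rho')/2}z_{\rho'}^{-1}P_{\rho'}$, your pairing against $P_{\rho'}$ is the paper's pairing up to a real scalar. Your way of evaluating it is just an unwinding of definitions, which is shorter and does not rely on Corollary~\ref{cor.isometry}; the written proof of that corollary is somewhat loose, since there the class functions $f_1,f_2$ are written as symmetric functions.

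The paper's route, in exchange, presents the lemma as an instance of the isometry property of the characteristic map, in parallel with Equation~\eqref{spin_char_value_inn_prod}. In both arguments the essential input is Theorem~\ref{image_spin_char_Gntilde}, used for $\och_n$ extended to all of $R^-(\wtilde{G}_n)$. This extension matters when $\rho\in\DP^1_n(\irr(G))$, because then $\wtilde{\chi}_\rho\notin R^0(\wtilde{G}_n)$.
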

        
    \begin{proof}
        We have
        \begin{align*}
            \langle \wtilde{\chi}_\rho, 1'_{\rho'} \rangle =\frac{1}{|\wtilde{G}_n|}\left[\wtilde{\chi}_\rho(\D^+_{\rho'})|\D^+_{\rho'}|-\wtilde{\chi}_\rho(\D^-_{\rho'})|\D^-_{\rho'}|\right]
            =2\frac{|\D^+_{\rho'}|}{|\wtilde{G}_n|}\wtilde{\chi}_\rho(\D^+_{\rho'})=\frac{\wtilde{\chi}_\rho(\D^+_{\rho'})}{z_{\rho'}}
        \end{align*}
        Now, by Corollary~\ref{cor.isometry}, $\displaystyle {\langle \wtilde{\chi}_\rho, 1'_{\rho'} \rangle}_{R(\wtilde{G}_n)}={\langle \och_n(\wtilde{\chi}_\rho), \och_n(1'_{\rho'}) \rangle}_{\Delta_n({G})}=\frac{\wtilde{\chi}_\rho(\D^+_{\rho'})}{z_{\rho'}}$.\\ Therefore, $\displaystyle\langle 2^{-\frac{d(\rho)}{2}}Q_\rho^*,\frac{2^{l(\rho')/2}}{z_{\rho'}}P_{\rho'}\rangle=\frac{\wtilde{\chi}_\rho(\D^+_{\rho'})}{z_{\rho'}}$, whence $\wtilde{\chi}_\rho(\D^+_{\rho'})=2^{\frac{l(\rho')-l(\rho)-d(\rho)}{2}}\langle {Q_\rho}, P_{\rho'}\rangle.$
    \end{proof}

 
    To state our desired recursion rule, we need to introduce some additional notation. Let $\rho\in \DP_n(\C(G))$. We can view $\rho$ as a $s$-partite strict partition, that is, $\rho=(\rho^1, \cdots, \rho^{s})$, where $\rho^i=\rho(c_i)\in \DP$. Let  $SD_{\rho}=(SD_{\rho^1}, \cdots, SD_{\rho^{s}})$ be the shifted diagram of $\rho$. A shifted border-strip tableaux $T$ of shape $\rho$ and type $\mu=(\mu_1,\cdots,\mu_t)\in \OP_n$ is defined to be a filling of $SD_{\rho}$ such that (a) $i$ appears $\mu_i$ times and all these appear in exactly one $SD_{\rho^j}$, (b) the entries are weakly increasing in both rows and columns, and (c) the shifted skew-diagram $\delta_i$ corresponding to the entries $i$ is either a border-strip or a double strip for every $1\leq i\leq t$. Let us denote the set of all shifted border strip tableaux of shape $\rho$ and type $\mu$ by $SBST(\rho,\mu)$. Once again, a shifted border strip tableaux of shape $\rho$ and type $\mu$ is equivalent to successive removal of border strips or double strips of size $\mu_t$ to  $\mu_1$ from $SD_{\rho}$ (that is, for each $1\leq i\leq t$, a border strip or double strip of size $\mu_i$ is removed from some $SD_{\rho^j}$) until we reach the empty $s$-partite partition. Let $\varepsilon_T(i):=\varepsilon_T(\delta_i)$ take the value $(-1)^{\h(\delta_i)}$ or $2(-1)^{\h(\delta_i)}$ according as $\delta_i$ is a border strip or a double strip. For a shifted border-strip tableaux $T$ of shape $\rho$ (an $s$-partite strict partition) and type $\mu$, we define $f_T(i)$ (called the index of $i$ in $T$) to be $j\in \{1,2,\ldots,s\}$ where $i$ appears in $SD_{\rho^j}$ (note that $i$ appears in an unique $SD_{\rho^j}$ by (a)). The following example illustrates all these notions.

    \begin{example}
        For $\rho=((6,3,1),(5,2,1), (4,2,1))$ and $\mu=(5,3,3,3,3,3,3,1,1)$, the following is a shifted border-strip tableaux of shape $\rho$ and type $\mu$:
        $$\displaystyle T=(\hspace{.2cm}\ytableausetup{smalltableaux}\begin{ytableau}  2 & 2 & 3 & 3 & 3 & 8 \\
        \none & 2 & 4 & 4 \\
        \none & \none & 4 \end{ytableau}\;,\;\begin{ytableau} 1 & 1 & 1 & 1 & 1  \\
         \none & 5 & 5 \\
        \none & \none & 5
        \end{ytableau}\;,\;\begin{ytableau} 6 & 6 & 6 & 9 \\
        \none & 7 & 7 \\
        \none & \none & 7
        \end{ytableau}\;).$$
        Since $\rho^{(1)}$ is filled by $2,\ 3,\ 4$ and $8$, so $f_T(2)=f_T(3)=f_T(4)=f_T(8)=1$. For the same reason, $f_T(1)=f_T(5)=2$, $f_T(6)=f_T(7)=f_T(9)=3$. Note that $\delta_i$ is a border strip if $i\in \{1,3,4,6,8,9\}$. Thus, $\varepsilon_T(1)=\varepsilon_T(3)=\varepsilon_T(6)=\varepsilon_T(8)=\varepsilon_T(9)=0$ and $\varepsilon_T(4)=-1$. Further, $\delta_i$ is a double strip if $i\in \{2,5,7\}$. In this case, we get $ \varepsilon_T(2)=\varepsilon_T(5)=\varepsilon_T(7)=-2$. 
    \end{example}
    

    
    We are now ready to state and prove the Morris recursion rule for computing values of the irreducible spin characters of $\wtilde{G}_n$.
    
    \begin{theorem}\label{main_theorem}
        Let $G$ be a finite group, $\irr(G)=\{\gamma_1,\cdots,\gamma_s\}$ be a fixed labelling of the irreducible characters of $G$, and $\C(G)=\{c_1,\ldots, c_s\}$ be a fixed labeling of the conjugacy classes of $G$. Let $\rho$ be a strict s-partite partition of $n$ and $\wtilde{\chi}_{\rho}$ denote the irreducible spin character of $\wtilde{G}_n$ indexed by $\rho$ (with respect to the labeling of $\irr(G)$). Let $\rho'\in \OP_n(\C(G))$ and $\sigma_{\rho'}=(g_1,\cdots,g_n,\sigma_{\mu})\in \D_{\rho'}^{+}$, where $\mu\in \OP_n$ is a decreasing rearrangement of $\cup_i\rho'(c_i)$.    Then,

        $$\wtilde{\chi}_\rho(\sigma_{\rho'})=2^{\frac{l(\rho')-l(\rho)-d(\rho)}{2}}\sum_{T\in \mathrm{SBST}(\rho,\mu)}\prod_{i=1}^{t}\varepsilon_T(i)\gamma_{f_T(i)}(g_i(\wtilde{\theta}(\sigma_{\rho'}))),$$
        where $\wtilde{\theta}(\sigma_{\rho'})=(g_1,\cdots,g_n,\theta(\sigma_{\mu}))\in G_n$, $\theta(\sigma_{\mu})=\pi_1\cdots\pi_t$ with $l(\pi_i)=\mu_i$, and $g_i(\wtilde{\theta}(\sigma_{\rho'}))$ denote the cycle product of $\wtilde{\theta}(\sigma_{\rho'})$ with respect to the cycle $\pi_i$.
    \end{theorem}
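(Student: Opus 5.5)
By Lemma~\ref{spin_char_value_inn_prod_Gntilde} we have $\wtilde{\chi}_\rho(\sigma_{\rho'})=2^{\frac{l(\rho')-l(\rho)-d(\rho)}{2}}\langle Q_\rho,P_{\rho'}\rangle$. It therefore suffices to show
\[
\langle Q_\rho,P_{\rho'}\rangle=\sum_{T\in \mathrm{SBST}(\rho,\mu)}\prod_{i=1}^{t}\varepsilon_T(i)\,\gamma_{f_T(i)}(g_i),
\]
where $g_i$ abbreviates the cycle product $g_i(\wtilde{\theta}(\sigma_{\rho'}))$. We prove this by rewriting $P_{\rho'}$ in the $p_r(\gamma)$ basis and then applying the classical Morris rule one factor at a time.

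The expansion of $P_{\rho'}$ goes as follows. Write $P_{\rho'}=\prod_{i=1}^t p_{\mu_i}(c(i))$, where $c(i)\in\C(G)$ is the class of the cycle product $g_i$ attached to the cycle $\pi_i$. The dual orthogonality relation gives $p_{\mu_i}(c(i))=\sum_{\gamma}\gamma(c(i)^{-1})\,p_{\mu_i}(\gamma)$. Multiplying out,
\[
P_{\rho'}=\sum_{f:\{1,\dots,t\}\to\{1,\dots,s\}}\Bigl(\prod_i \gamma_{f(i)}(g_i^{-1})\Bigr)\prod_{j=1}^s p_{\mu^{(f,j)}}(\gamma_j),
\]
where $\mu^{(f,j)}$ is the odd partition formed by the $\mu_i$ with $f(i)=j$. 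Since the inner product is Hermitian, conjugation turns $\gamma(g^{-1})$ into $\gamma(g)$, which is why the statement has $\gamma(g_i)$. I would state this conjugation convention explicitly, and also check that $\gamma(c^{-1})$ rather than $\gamma(c)$ is the right entry in the dual orthogonality relation.

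The key structural step is the inner product on the $\gamma$-side. I would show that the $p_r(\gamma)$ with $r$ odd are orthogonal, with $\langle p_\lambda(\gamma),p_{\lambda'}(\gamma')\rangle=2^{-l(\lambda)}z_\lambda\delta_{\lambda\lambda'}\delta_{\gamma\gamma'}$, and that the inner product factorises over $\gamma$. This follows by substituting $p_r(\gamma)=\sum_c\gamma(c)p_r(c)/z_c$ and using the row orthogonality of characters together with $z_\rho=\prod_c z_{\rho(c)}\zeta_c^{l(\rho(c))}$; it is the same computation as in Macdonald's Appendix B, with the factor $2^{-l}$ carried along. Consequently each $\Delta(G)$-factor behaves like an independent copy of $\Delta$ with the inner product of Section~\ref{Frob_char_Sntilde}. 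For each $f$ this gives
\[
\Bigl\langle \prod_j Q_{\rho(\gamma_j)}(\gamma_j),\ \prod_j p_{\mu^{(f,j)}}(\gamma_j)\Bigr\rangle=\prod_j\langle Q_{\rho(\gamma_j)},p_{\mu^{(f,j)}}\rangle,
\]
which vanishes unless $|\mu^{(f,j)}|=|\rho(\gamma_j)|$ for every $j$. Here I use that $\lambda\mapsto$ ($p_\lambda\mapsto p_\lambda(\gamma)$) is an isometric embedding of $\Delta$ into $\Delta(G)$, and that $Q_\lambda(\gamma)$ is by definition the image of $Q_\lambda$.

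Finally, I apply the classical Morris rule (the shifted border strip tableaux form) to each factor $\langle Q_{\rho(\gamma_j)},p_{\mu^{(f,j)}}\rangle$. This expresses each factor as a sum of $\varepsilon(T_j)$ over shifted border strip tableaux of shape $\rho(\gamma_j)$ whose strips have the sizes $\mu_i$ with $f(i)=j$, taken in the induced order. A pair consisting of $f$ and such a tuple $(T_j)_j$ is exactly an element $T\in\mathrm{SBST}(\rho,\mu)$ with $f_T=f$, and $\prod_j\varepsilon(T_j)=\prod_i\varepsilon_T(i)$. Summing over $f$ yields the claim.

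I expect the main obstacle to be the orthogonality and factorisation computation for the $p_r(\gamma)$ basis, together with tracking complex conjugates and indices correctly. One indexing point needs care: $\rho$ is indexed by $\irr(G)$, so the shapes $SD_{\rho^j}$ should be read as $\rho(\gamma_j)$. The remaining steps are bookkeeping on top of the single-variable Morris rule.
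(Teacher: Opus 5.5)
Your proposal is correct and follows essentially the same route as the paper: Lemma~\ref{spin_char_value_inn_prod_Gntilde}, the dual-orthogonality expansion of $P_{\rho'}$ as a sum over functions $f:\{1,\dots,t\}\to\{1,\dots,s\}$, the classical Morris rule applied factor by factor, and the bijection between pairs $(f,(T_j)_j)$ and $\mathrm{SBST}(\rho,\mu)$, with the complex conjugation handled as you describe. The one difference is that the paper computes the coefficient of $Q_\rho$ in $P_{\rho'}$ (by pushing $p_\mu=\sum_\lambda 2^{-l(\lambda)}\langle p_\mu,Q_\lambda\rangle Q_\lambda$ through $p_r\mapsto p_r(\gamma)$) and recovers the inner product as $\langle P_{\rho'},Q_\rho\rangle=2^{l(\rho)}L_{\rho'\rho}$ via the cited orthogonality $\langle Q_\rho,Q_{\rho'}\rangle=2^{l(\rho)}\delta_{\rho\rho'}$ of Theorem~\ref{prthogonality_Qrho}, so it never needs the orthogonality and factorisation of the $p_\lambda(\gamma)$ basis that you flag as your main obstacle; that fact is true and underlies the cited theorem, so your more self-contained version also works.
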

    
    \begin{proof}
        For $\rho'\in \OP_n(\C(G))$, recall that $\D_{\rho'}^{+}$ is one of the split classes indexed by $\rho'$, represented by $\sigma_{\rho'}=(g_1,\ldots,g_n,\sigma_{\mu})$ (see discussion after Proposition~\ref{centralizer_split_classes_GwrSntilde}), where $\mu=(\mu_1,\mu_2,\ldots,\mu_t)\in \OP_n$ is a decreasing rearrangement of $\cup_{i=1}^{s} \rho'(c_i)$. Let $\theta(\sigma_{\mu})=\pi_1\pi_2\ldots \pi_t$, where $l(\pi_i)=\mu_i$. If the cycle product $g_i(\wtilde{\theta}(\sigma_{\rho'}))$ of $\wtilde{\theta}(\sigma_{\rho'})=(g_1,g_2,\ldots, g_n, \theta(\sigma_{\mu}))$ with respect to the cycle $\pi_i$ belongs to the conjugacy class $c\in \C(G)$, then by definition  of $\D_{\rho'}$, $\mu_i$ appears as a part of $\rho'(c)$. We write $\mu_i(c)$ instead of $\mu_i$ to emphasize the fact that $\mu_i$ appears as a part of  $\rho'(c)$. Thus, we can write $\mu=(\mu_1(c_{j_1}), \mu_2(c_{j_2}), \ldots, \mu_t(c _{j_t}))$, where $c_{j_k}\in \C(G)$ for all $1\leq k \leq s$.

        \medskip

        Let $\nu = (\nu_1,\nu_2, \ldots, \nu_s)$ be an $s$-partite partition of $n$ such that $\nu_i\in \OP$ for all $1\leq i\leq s$. Define $\displaystyle \wtilde{P}_{\nu}:=\prod_{i=1}^{s}p_{\nu_i}(\gamma_i)$. For a partition $\mu\in \OP_n$ and $\gamma\in \irr(G)$, $\displaystyle p_{\mu}(\gamma)=\sum_{\lambda\in \DP_n}2^{-l(\lambda})\langle p_{\mu}, Q_{\lambda}\rangle Q_{\lambda}(\gamma)$.
        Thus, 
        $$\wtilde{P}_{\nu}=\prod_{i=1}^{s}\sum_{\lambda\in \DP_{n_i}}2^{-l(\lambda)}\langle p_{\nu_i}, Q_{\lambda}\rangle Q_{\lambda}(\gamma_i),$$
        where $\nu_i\in \OP_{n_i}$ (so that $\sum_{i=1}^{s}n_i=n$). Thus, 
        $$\wtilde{P}_{\nu}=\sum_{\rho\in \DP_n(\irr(G))}[2^{-l(\rho)}\prod_{i=1}^{s}\langle p_{\nu_i},Q_{\rho(\gamma_i)}\rangle]Q_{\rho}.$$

        \noindent Thus, if $\rho=(\rho^1,\ldots, \rho^s)\in \DP_n(\irr(G))$ is a $s$-partite strict partition of $n$, then the coefficient of $Q_{\rho}$ in the expansion of $\wtilde{P}_{\nu}$ with respect to the basis $\{Q_{\rho}\mid \rho \in \DP_n(\irr(G))\}$ of $\Delta_n(G)$ is given by 
        $$2^{-l(\rho)}\sum_{(T_1,T_2,\ldots, T_s)\in \mathscr{T}} \prod_{i=1}^{s}\varepsilon(T_i),$$
        where $\mathscr{T}:=\text{SBST}(\rho^1,\nu_1)\times \text{SBST}(\rho^2,\nu_2) \times \cdots \times \mbox{SBST}(\rho^s,\nu_s)$, and $\varepsilon(T_i)$ is as defined in Section 2.4. Here it is clear that the coefficient of $Q_{\rho}$ is non-zero implies that $|\rho^i|=|\nu_i|$ for every $1\leq i\leq s$.

        \medskip
        
        \noindent Let $\displaystyle P_{\rho'}=\sum_{\rho \in \DP_n(\irr(G))} L_{\rho'\rho}Q_{\rho}$. We will find an expression for $L_{\rho'\rho}$. Recall that 
        $$P_{\rho'}=\prod_{c\in \C(G)}p_{\rho'(c)}(c)=\prod_{i=1}^{t}p_{\mu_i}(c_{j_i}).$$
        In the above, $\mu_i(c_{j_i})$ is replaced by $\mu_i$ for convenience. Now, $\displaystyle p_{\mu_i}(c_{j_i})=\sum_{\gamma\in \irr(G)}\gamma(c_{j_i}^{-1})p_{\mu_i}(\gamma)$. We obtain the following expansion of $P_{\rho'}$:
        \begin{align*}
            P_{\rho'}&=p_{\mu_1}(c_{j_1})p_{\mu_2}(c_{j_2})\cdots p_{\mu_t}(c_{j_t})\\
            &=\left(\sum_{\gamma \in \irr(G)}\gamma({c_{j_1}^{-1}})p_{\mu_1}(\gamma)\right)\left(\sum_{\gamma \in \irr(G)}\gamma({c_{j_2}^{-1}})p_{\mu_2}(\gamma)\right)\cdots \left(\sum_{\gamma \in \irr(G)}\gamma({c_{j_t}^{-1}})p_{\mu_t}(\gamma)\right)\\
            &=\sum_{f \in \mathscr{F}} \prod_{i=1}^t \gamma_{f(i)}({c_{j_i}^{-1}})p_{\mu_i}(\gamma_{f(i)}),
        \end{align*}
        where $\mathscr{F}$ is the set of all functions from $\{1,2,\cdots, t\} \to \{1,2,\cdots, s\}$.
        We note that $L_{\rho'\rho}$ is the sum of all the coefficients of $Q_{\rho}$ that appear in the expansion of each of the summands $\displaystyle \prod_{i=1}^t \gamma_{f(i)}({c_{j_i}^{-1}})p_{\mu_i}(\gamma_{f(i)})$ for each $f\in \mathscr{F}$, with respect to the basis $\{Q_{\rho}\mid \rho \in \DP_n(\irr(G))\}$ of $\Delta_n(G)$.

        \medskip
        
        \noindent For $f\in \mathscr{F}$, we examine the coefficient of $Q_{\rho}$ that appears in  $\displaystyle \prod_{i=1}^t \gamma_{f(i)}({c_{j_i}^{-1}})p_{\mu_i}(\gamma_{f(i)})$. Consider the $s$-partite partition $\hat{\mu}=(\hat{\mu}^1,\ldots, \hat{\mu}^s)$ where $\hat{\mu}^k$ consists of the parts $\{\mu_{j}\mid f(j)=k\}$ arranged in decreasing order. Thus, the coefficient of $Q_{\rho}$ is given by 
        $$2^{-l(\rho)}\left(\prod_{i=1}^{t}\gamma_{f(i)}(c_{j_i}^{-1})\right)\sum_{(T_1,T_2,\ldots, T_s)\in \mathscr{T}} \prod_{i=1}^{s}\varepsilon(T_i),$$
        where $\mathscr{T}=\text{SBST}(\rho^1,\hat{\mu}^1)\times \text{SBST}(\rho^2,\hat{\mu}^2) \times \cdots \times \text{SBST}(\rho^s,\hat{\mu}^s)$.  Observe that $(T_1,T_2,\ldots, T_s)$ corresponds to a shifted border strip tableaux $T$ of shape $\rho$ and type $\mu$. Moreover, using the observation made in the first paragraph of the proof, we get that the coefficient $\displaystyle \left(\prod_{i=1}^{t}\gamma_{f(i)}(c_{j_i}^{-1})\right)\prod_{i=1}^{s}\varepsilon(T_i)$ is precisely $\displaystyle \prod_{i=1}^{t}\varepsilon_T(i)\overline{\gamma_{f_T(i)}(g_i(\wtilde{\theta}(\sigma_{\rho'})))}$.

        \medskip

        \noindent Thus, we conclude that the coefficient $L_{\rho'\rho}$ is 
        $$2^{-l(\rho)}\sum_{T\in \mathrm{SBST}(\rho,\mu)}\prod_{i=1}^{t}\varepsilon_T(i)\overline{\gamma_{f_T(i)}(g_i(\wtilde{\theta}(\sigma_{\rho'})))}.$$
        Since $\{Q_{\rho}\mid \rho \in \DP_n(\irr(G))\}$ form an orthogonal basis for $\Delta_n(G)$ with $\langle Q_{\rho}, Q_{\rho}\rangle=2^{l(\rho)}$, we conclude that $L_{\rho'\rho}=2^{-l(\rho)}\langle P_{\rho'}, Q_{\rho}\rangle$, whence
        $$\langle P_{\rho'}, Q_{\rho}\rangle=\sum_{T\in \mathrm{SBST}(\rho,\mu)}\prod_{i=1}^{t}\varepsilon_T(i)\overline{\gamma_{f_T(i)}(g_i(\wtilde{\theta}(\sigma_{\rho'})))}.$$
        By Lemma~\ref{spin_char_value_inn_prod_Gntilde}, 
        $$\wtilde{\chi}_{\rho}(\sigma_{\rho'})=2^{\frac{l(\rho')-l(\rho)-d(\rho)}{2}}\langle {Q_\rho}, P_{\rho'}\rangle=2^{\frac{l(\rho')-l(\rho)-d(\rho)}{2}}\sum_{T\in \mathrm{SBST}(\rho,\mu)}\prod_{i=1}^{t}\varepsilon_T(i)\gamma_{f_T(i)}(g_i(\wtilde{\theta}(\sigma_{\rho'}))).$$
        Our proof is now complete.
    \end{proof}

    \begin{remark}
        We remark that Stembridge gave a character-theoretic proof for the recursive rule that he obtained for the values of the irreducible characters of $G\wr S_n$. Our proof given here can be carried out using Macdonald's symmetric function framework to obtain the same for the irreducible characters of $G\wr S_n$, thereby giving a symmetric function-theoretic proof.
    \end{remark}

\section{\texorpdfstring{$0$-$1$}{0-1} boundary sequence of shifted Young diagrams}\label{0-1-seq-shifted}
    The Murnaghan-Nakayama rule involves removing border strips from Young diagrams. One can interpret this removal procedure using the $0$-$1$ boundary sequence of a Young diagram (see \cite[
    Exercise 7.59]{stan}). In fact, for $r\in \mathbb{N}$, the notion of the $r$-core and $r$-quotient of a partition can be described entirely using this sequence (see \cite{rr}). This is equivalent to the $r$-abacus diagram of a partition which is commonly used to work with cores and quotients (see \cite[2.7]{jk}).  In this final section, we describe an analogous 0-1 boundary sequence for shifted diagrams and interpret the Morris recursion rule for $\wtilde{S}_n$ in terms of this sequence. Clearly, such an interpretation can be applied to $s$-partite shifted diagrams as well. Therefore, we can implement Theorem~\ref{main_theorem} in a computer algebra system such as \textsf{SageMath}, using these sequences. 

    \medskip
    
    Let $\lambda \in \DP_n$. We associate a $0-1$ boundary sequence to the shifted diagram $SD_\lambda$ of $\lambda$ in the following way: it traces the outward boundary of the diagram starting from the bottom-most box of the main diagonal and ending at the last box of the first row. Each horizontal and vertical step is labeled by $1$ and $0$ respectively. We denote this boundary sequence by $B(\lambda)$. For convenience, we can add infinitely many 1's at the end of the sequence. The following example illustrates this. 

    \begin{example} Let $\lambda=(7,5,3,2,1)\in \DP_{18}$. We label the boundary of $SD_{\lambda}$ in the following way:

    \medskip

    \begin{center}
    
    \begin{tikzpicture}[scale=0.5]
        \foreach \x in {0,...,6} \draw (\x,4) rectangle (\x+1,5);
        \foreach \x in {1,...,5} \draw (\x,3) rectangle (\x+1,4);
        \foreach \x in {2,...,4} \draw (\x,2) rectangle (\x+1,3);
        \foreach \x in {3,...,4} \draw (\x,1) rectangle (\x+1,2);
        \foreach \x in {4}       \draw (\x,0) rectangle (\x+1,1);

        \node[anchor=west, inner sep=1pt] at (5,0.5) {$0$};
        \node[anchor=west, inner sep=1pt] at (5,1.5) {$0$};
        \node[anchor=west, inner sep=1pt] at (5,2.5) {$0$};
        \node[anchor=west, inner sep=1pt] at (6,3.5) {$0$};
        \node[anchor=west, inner sep=1pt] at (7,4.5) {$0$};

        \node[anchor=north, inner sep=1pt] at (4.8,0) {$1$};
        \node[anchor=north, inner sep=1pt] at (5.8,3) {$1$};
        \node[anchor=north, inner sep=1pt] at (6.8,4) {$1$};
    \end{tikzpicture}

    \end{center}
    \noindent The corresponding boundary sequence $B(\lambda)=(1,0,0,0,1,0,1,0)$. 
    \end{example}
    
    \noindent This first $1$ is called the \textit{pivot} $1$ of $B(\lambda)$. Thus, if $\lambda$ is not an empty partition, then $B(\lambda)$ always starts with $1$ and ends with $0$ (after removing the unwanted trailing 1's). If $\lambda$ is empty, then $B(\lambda)$ is just a sequence of 1's. Conversely, given a boundary sequence $B$, one can uniquely recover the strict partition $\lambda$ for which $B=B(\lambda)$. Suppose that $B$ contains $l$ zeros occurring at positions $p_1,p_2,\ldots,p_l$ listed from left to right. For $1\le i\le l$, let $a_i$ denote the number of $1$'s between the pivot $1$ and the $0$ (including pivot $1$) at position $p_i$. Then the strict partition corresponding to the boundary sequence $B(\lambda)$ is $\lambda=\bigl(a_l+l-1, a_{l-1}+l-2, \ldots, a_2+1, a_1\bigr).$
    
    Now we interpret the Morris recursion rule via the boundary sequence. In order to do that, we simply interpret in terms of the boundary sequence the following: (1) the removal of a border strip of Type-I or Type-II or a double strip of Type-III of size $r$, where $r$ is an odd positive integer, and  (2) the constant $\varepsilon(\delta)$ corresponding to a removal $\delta$ of each type, both of which are explained in Subsection~\ref{Morris_recursion_rule_Sntilde}. Let $\lambda=(\lambda_1,\lambda_2,\cdots,\lambda_l)\in \DP_n$ and the corresponding boundary sequence $B(\lambda)=(b_1,b_2,\cdots, b_t)$, where $b_1=1$ is the pivot. Let $r$ be an odd positive integer. Let $\hat{\lambda}$ denote the partition obtained after removing a border strip or a double strip.

    \medskip

    \begin{enumerate}
        \item \textbf{Type-I}: Recall that this type corresponds to a removal of a border strip of size $r$ from $SD_{\lambda}$ (which does not end at the main diagonal) and is equivalent to subtracting $r$ from a part of $\lambda$ which is strictly greater than $r$ and rearranging the resulting partition in decreasing order. An existence of such a border strip of size $r$ is guaranteed if and only if there exists some $i\ (i\neq 1)$ such that $b_i=1$ and $b_{i+r}=0$.  The boundary sequence  obtained after flipping $1$ and $0$ in the $i$-th and $(i+r)$-th positions of $B(\lambda)$ is precisely $B(\hat{\lambda})$. It is clear that $\varepsilon(SD_{\lambda/\hat{\lambda}})$ is $(-1)^z$, where $z$ is the number of zeros in $B(\lambda)$ strictly lying between $b_i$ and $b_{i+r}$.
        
        \item \textbf{Type-II}: Recall that this type corresponds to a removal of a border strip of size $r$ from $SD_{\lambda}$, which ends at the main diagonal of the last row, and is equivalent to the removal of a part of $\lambda$ that equals $r$. An existence of such a border strip of size $r$ is guaranteed if and only if $b_{r+1}=0$.  The boundary sequence obtained after flipping $b_{r+1}$ to $1$ of $B(\lambda)$ is precisely $B(\hat{\lambda})$. It is clear that $\varepsilon(SD_{\lambda/\hat{\lambda}})$ is $(-1)^z$, where $z$ is the number of 0's in $B(\lambda)$ strictly lying between $b_1$ and $b_{r+1}$. 
        
        \item \textbf{Type-III}: Recall that this type corresponds to a removal of a double strip of size $r$, that is, two consecutive border strip removals of size $x$ and $y$ ($x>y$ and $x+y=r$), both of which end at the main diagonal of the last row. This is equivalent to removing two parts from $\lambda$, one of which equals $x$ and the other equals $y$. An existence of such a double strip is guaranteed if and only if  $b_{x+1}=0$ and $b_{y+1}=0$. Flipping both these zeros to one (that is, $b_{x+1}=b_{y+1}=1$) yields the boundary sequence of $\hat{\lambda}$. In this case, $\varepsilon(SD_{\lambda/\hat{\lambda}})=2(-1)^{z}$, where $z$ is the number of zeroes strictly between $b_1$ and $b_{x+1}$ of $B({\lambda})$.
    \end{enumerate}

    \begin{example}
    Let $\lambda=(7,5,3,2,1)$. The following diagram illustrates the removal of a border strip or a double strip of size 3 at each step, using the 0-1 boundary sequence of the shifted diagram we obtain at each step. 
    \begin{center}
    \renewcommand{\arraystretch}{1.6}
    \small

    \begin{tabular}{ccc}
        $\lambda^1=\lambda=(7,5,3,2,1)$ & & $\lambda^2=(7,5,2,1)$\\
        \bardiag{5}{7,5,3,2,1}{4/3,4/4,4/5}
        & $\arr{Type-II removal}$ &
        \bardiag{4}{7,5,2,1}{5/1,6/1,5/2}\\
        $B(\lambda^1)=(1,0,0,\bx{0},1,0,1,0)$
        & $\arr{flip the boxed $0$ to $1$}$ &
        \tm{d2}{$B(\lambda^2)=(1,0,0,1,\bx{1},0,1,\bx{0})$}
    \end{tabular}

    \vspace{1.4cm}

    \begin{tabular}{ccc}
        \tm{t3}{$\lambda^3=(5,4,2,1)$} & & $\lambda^4=(5,4)$\\
        \bardiag{4}{5,4,2,1}{2/3,3/3,3/4}
        & $\arr{Type-III removal}$ &
        \bardiag{2}{5,4}{4/1,3/2,4/2}\\
        $B(\lambda^3)=(1,\bx{0},\bx{0},1,0,0,1,1)$
        & $\arr{flip the boxed zeros to one}$ &
        \tm{d4}{$B(\lambda^4)=(1,1,\bx{1},1,0,\bx{0},1,1)$}
        \end{tabular}
        \flowarrow{d2}{t3}{Type-I removal}{flip the boxed $1$ to $0$ and the boxed $0$ to $1$}

        \vspace{1.4cm}

        \begin{tabular}{ccc}
        \tm{t5}{$\lambda^5=(4,2)$} & & $\lambda^6=(2,1)$\\
        \bardiag{2}{4,2}{2/1,3/1,2/2} & $\arr{Type-I removal}$ &
        \bardiag{2}{2,1}{0/1,1/1,1/2}\\
        $B(\lambda^5)=(1,\bx{1},0,1,\bx{0},1,1,1)$ & $\arr{flip the boxed $1$ to $0$ and the boxed $0$ to $1$}$ & \tm{d6}{$B(\lambda^6)=(1,\bx{0},\bx{0},1,1,1,1,1)$}
    \end{tabular}
    \flowarrow{d4}{t5}{Type-I removal}{flip the boxed $1$ to $0$ and the boxed $0$ to $1$}

    \vspace{1.4cm}

    \begin{tabular}{c}
        \tm{t7}{$\lambda^7=\varnothing$}\\
        $B(\lambda^7)=(1,1,1,1,1,1,1,1)$.
    \end{tabular}
    \flowarrow{d6}{t7}{Type-III removal}{flip the boxed zeros to one}
    \end{center}
    Thus, we obtain the empty diagram in the last step, whose 0-1 boundary sequence is all 1's. 
    \end{example}

    \begin{remark}
        Morris used the term $r$-bar to refer collectively to a border strip or a double strip of size $r$. Analogous to $r$-core and $r$-quotient of a partition, Morris and Yasheen defined $r$-bar-core and $r$-bar-quotient of a strict partition (see \cite{mo.86}). They defined the notion of a bar-abacus diagram to interpret the removal of bars from a shifted diagram of a strict partition and used it to prove several results on bar-core and bar-quotient of a strict partition. These results are similar to those available for cores and quotients of a partition (see \cite[2.7]{jk}). In fact, the example given above demonstrates one possible way of obtaining the $3$-bar-core of $(7,5,3,2,1)$, which is the empty partition. We believe that all these results can be re-proved using the 0-1 boundary sequence defined above. We shall not pursue it here. 
    \end{remark}
\subsection*{Acknowledgments} We thank Prof. Arvind Ayyer for several conversations on this work. The first-named author gratefully acknowledges the financial support received from the New Faculty Seed Grant (NFSG), BITS Pilani, under Grant No. N7/26/805. The second-named author thanks the support of the Institute Postdoctoral Fellowship of IISER Mohali during this work. 
\bibliographystyle{amsalpha} 
\bibliography{Biblography}
\end{document}